\smartqed  \usepackage{graphicx}
\newtheorem{prop}{Proposition}[section]
\newtheorem{assumption}{Assumption}[section]
\journalname{Numerical Algorithms}
\begin{document}

\title{A Structural Analysis of Field/Circuit Coupled Problems Based on a Generalised Circuit Element}

\author{Idoia Cortes Garcia         \and
        Herbert De Gersem 			\and
        Sebastian Sch\"ops
}

\institute{Idoia Cortes Garcia, Herbert De Gersem and Sebastian Sch\"ops \at
              Technische Universit\"at Darmstadt,  64289 Darmstadt, Germany\\
              Tel.: +49 6151 16 - 24392\\
              Fax: +49 6151 16 - 24404\\
              \email{cortes@gsc.tu-darmstadt.de}, \email{degersem@temf.tu-darmstadt.de}, \email{schoeps@temf.tu-darmstadt.de}   
}

\date{Received: date / Accepted: date}

\maketitle

\begin{abstract}
In some applications there arises the need of a spatially distributed description of a physical quantity inside a device coupled to a circuit. Then, 
the in-space discretised system of partial differential equations is coupled to the system of equations describing the circuit (Modified Nodal 
Analysis) which yields a system of Differential Algebraic Equations (DAEs).
This paper deals with the differential index analysis of such coupled systems. For that, a new generalised inductance-like element is defined. The 
index of the DAEs obtained from a circuit containing such an element is then related to the topological characteristics of the circuit's 
underlying graph.  
Field/circuit coupling is performed when circuits are simulated containing elements described by Maxwell's equations.
The index of such 
systems with two different types of magnetoquasistatic formulations (A* and T-$\Omega$) is then deduced by showing that the spatial discretisations 
in both cases 
lead to an inductance-like 
element.
\keywords{Differential Algebraic Equations \and Differential Index \and Modified Nodal Analysis \and  Eddy Currents \and T-Omega Formulation}
\end{abstract}

\section{Introduction}
\label{intro}
Classically in applications with electrical circuitry, they are modelled as networks of which elements are described by lumped  
models. Those elements idealise the behaviour of the spatially distributed devices to simple algebraic or differential relations between currents and 
voltages. Most 
circuit solvers use Modified Nodal Analysis (MNA) to describe the circuit topolgy \cite{Ho_1975aa},
which together with the device models leads to a system of Differential Algebraic Equations (DAEs) 
\cite{Lamour_2013aa,Gunther_1995aa,Estevez-Schwarz_2000aa}.
In many applications standard MNA lumped-element models are not precise enough. This happens when some of the devices can only be treated by detailed 
field models, e.g. to resolve nonlinear wave propagation effects or frequency-dependent current distributions. An example of such an application 
is an electrical drive, consisting of a machine, a power converter and a control device. 

Spatially distributed electromagnetic fields are described by Maxwell's equations \cite{Maxwell_1864aa,Jackson_1998aa}. Those are a set of 
Partial Differential Equations that 
can be numerically simulated by techniques such as the Finite Element Method \cite{Monk_2003aa} or the Finite Integration Technique 
\cite{Weiland_1977aa}. However, 
simulating 
all devices and the interconnecting circuit with such a method is prohibitively costly. A better approach is to couple the system of equations 
describing the 
circuit 
with the spatially discretised systems of equations describing the fields inside some selected devices 
\cite{Tsukerman_1993ab,Bartel_2011aa,Bedrosian_1993aa}. This 
is 
sometimes called refined modelling \cite{Bartel_2007aa}.

Under low frequency assumptions, Maxwell's equations can be simplified to a magnetoquasistatic setting. Typically, the remaining equations are 
combined into a formulation
by defining 
appropriate 
potentials. Depending on the choice of potentials, different formulations are obtained \cite{Biro_1995aa,Webb_1993aa}, such as the A* and the 
T-$\Omega$ formulations. After spatial discretisation 
of the 
magnetoquasistatic approximation, a system of DAEs
is obtained \cite{Cortes-Garcia_2018aa}. 

The coupling of the field and circuit systems leads to a coupled system of DAEs whose numerical (and analytical) complexity can be described by 
the 
notion of its index
\cite{Petzold_1982aa}. There are several index definitions \cite{Mehrmann_2015aa}. This paper focuses on the field/circuit coupled system's 
differential index and develops theoretical results that allow to deduce the index by studying the properties of the field's subsystem 
of equations and the topological features of the circuit, only. For that 
purpose a new generalised inductance-like element is defined and index results of a circuit containing these elements are deduced.
Both common magnetoquasistatic formulations are coupled to a circuit described by MNA and are shown to be important examples of such a type of 
element. The new simplified analysis for the A* formulation confirms known results from literature (see e.g. \cite{Bartel_2011aa,Nicolet_1996aa}), 
while the analysis of the T-Omega case is new. Related works, e.g. \cite{Tsukerman_2002aa}, are neither considering MNA nor 3D models.

The paper is structured as follows: Section~\ref{sec:DAE} is a brief introduction into DAEs and defines the differential index.   
Section~\ref{sec:Circuit}
introduces the MNA, important concepts for its index study and a new generalised definition of a circuit element with novel theoretical results. The 
field 
equations, different formulations thereof, field-circuit couplings and spatial discretisation are presented in Section~\ref{sec:field}. 
Section~\ref{sec:index} 
states the index results for the coupled system.
Finally, 
in Section~\ref{sec:numeric} some numerical results are shown and Section~\ref{sec:conclusions} draws the conclusions.

\section{Differential Algebraic Equations}\label{sec:DAE}
A system of differential algebraic equations has the form
\begin{equation}\label{eq:dae}
\mathbf{F}(\ensuremath{ \mathbf{x} }', \ensuremath{ \mathbf{x} }, t)= 0,
\end{equation}
with $\ensuremath{ \mathbf{x} }' = \frac{\mathrm{d}\ensuremath{ \mathbf{x} }}{\mathrm{d}t}$, $\det\left(\frac{\partial \mathbf{F}}{\partial 
\ensuremath{ \mathbf{x} }'}\right) = 0$ and $\mathbf{x}:\mathcal{I}=[t_0,\ 
t_\mathrm{end}]\rightarrow\mathbbm{R}^n$.

The index (see \cite{Brenan_1995aa}) allows to classify a system of DAEs according to its numerical and analytical complexity. Even though there 
are 
several 
different index types (e.g. the perturbation, nilpotency or tractability index), they all coincide in the case of linear DAEs \cite{Brenan_1995aa}. 
For the analysis in the paper, the 
differential 
index concept is used. It can 
be intuitively thought of as a way of measuring how far away the system is of an ordinary differential equation (ODE) in terms of differentiation. 
The higher the index of a DAE is, the more difficulties arise when treating te system numerically or analytically. Higher index DAEs are for example 
more difficult to initialise or have a higher sensitivity towards small perturbations.
\begin{definition}[Differential index \cite{Brenan_1995aa}] 
	The system of DAEs \eqref{eq:dae} has differential index $m$, if $m$ is the minimal number of differentiations
	\begin{align}\label{eq:daeindex}
	\frac{\mathrm{d}}{\mathrm{d}t}\mathbf{F}(\ensuremath{ \mathbf{x} }', \ensuremath{ \mathbf{x} }, t),  \cdots,  
	\frac{\mathrm{d}^{(m)}}{\mathrm{d}t^{(m)}}\mathbf{F}(\ensuremath{ \mathbf{x} }', \ensuremath{ \mathbf{x} }, t)
	\end{align}
	needed, such that one can write a system of ordinary differential equations
	\begin{align*}
	\ensuremath{ \mathbf{x} }' = \Phi(\ensuremath{ \mathbf{x} }, t),
	\end{align*}
	with $\Phi$ being a continuous function in $\ensuremath{ \mathbf{x} }$ and $t$, only with algebraic manipulations of equations 
	\eqref{eq:dae}-\eqref{eq:daeindex}.
\end{definition}

\begin{assumption}[Smoothness]\label{ass:diff}
	In our differential index analysis we assume that all the functions involved in the studied system are sufficiently differentiable.
\end{assumption}
For a relaxation of Assumption \ref{ass:diff}, other index definitions should be used, such as the tractability index (see e.g. 
\cite{Estevez-Schwarz_2000aa}).

\section{Circuit System}\label{sec:Circuit}
Modern electrical circuit simulators use Modified Nodal Analysis \cite{Reis_2014aa}, where the circuit is modelled as a directed graph with 
an incidence 
matrix $\mathbf{A}$. We consider circuits containing capacitors (C), inductors (L), resistors (R) and voltage (V) and current (I) sources.
Using Kirchhoff's Current Law and the lumped parameter models describing the devices at the circuit branches by algebraic 
or differential equations \cite{Reis_2014aa}, the system of DAEs of the conventional MNA \cite{Estevez-Schwarz_2000aa} is obtained, 
\begin{align}
\ensuremath{ \mathbf{A} }_\mathrm{C}\frac{\mathrm{d}}{\mathrm{d}t}\mathbf{q}(\ensuremath{ \mathbf{A} }_\mathrm{C}^{\top} \mathbf{e}, t) + 
\ensuremath{ \mathbf{A} }_\mathrm{R}\mathbf{g}(\ensuremath{ \mathbf{A} }_\mathrm{R}^{\top}\mathbf{e}, t)+ 
\ensuremath{ \mathbf{A} }_\mathrm{L}\ensuremath{ \mathbf{i} }_\mathrm{L} + 
\ensuremath{ \mathbf{A} }_\mathrm{V}\ensuremath{ \mathbf{i} }_\mathrm{V} + 
\ensuremath{ \mathbf{A} }_\mathrm{I}\ensuremath{ \mathbf{i} }_\mathrm{src}(t) &= 0 \nonumber \\
\frac{\mathrm{d}}{\mathrm{d}t}\phi_\mathrm{L}(\ensuremath{ \mathbf{i} }_{\mathrm{L}},t)   - \ensuremath{ \mathbf{A} }_\mathrm{L}^{\top} \mathbf{e} &= 
0 \label{eq:classicmna} \\
\ensuremath{ \mathbf{A} }_\mathrm{V}^{\top} \mathbf{e} - \ensuremath{ \mathbf{v} }_{\mathrm{src}}(t) &= 0, \nonumber
\end{align} 
for $t\in\mathcal{I}=[t_0, t_\mathrm{end}]\subset \mathbbm{R}$.
Here, $\ensuremath{ \mathbf{A} }_\star$ represents the columns of the incidence matrix attributed to branches that contain a specific device and 
$\ensuremath{ \mathbf{A} } = 
[\ensuremath{ \mathbf{A} }_\mathrm{C} \ \ensuremath{ \mathbf{A} }_\mathrm{R} \ \ensuremath{ \mathbf{A} }_\mathrm{L} \ \ensuremath{ \mathbf{A} 
}_\mathrm{V} \ \ensuremath{ \mathbf{A} }_\mathrm{I}]$, 
$\mathbf{e}:\mathcal{I}\rightarrow\mathbbm{R}^{n_\mathrm{e}}$ is 
the vector of node potentials, 
$\ensuremath{ \mathbf{i} }_\star:\mathcal{I}\rightarrow\mathbbm{R}^{n_\star}$ the vector of currents through branches containing element $\star$ and 
$\mathbf{q}(\cdot)$, 
$\mathbf{g}(\cdot)$, 
$\phi_\mathrm{L}(\cdot)$, $\ensuremath{ \mathbf{i} }_\mathrm{src}(\cdot)$, $\ensuremath{ \mathbf{v} }_{\mathrm{src}}(\cdot)$ 
are functions of the lumped parameter models (C, R, L, I, V). 
The voltage across a branch can be extracted with $\ensuremath{ \mathbf{v} }_\star = \ensuremath{ \mathbf{A} }_\star^{\top} \mathbf{e}$.

When discussing the characteristics of circuits, two concepts that describe the topological properties of the underlying graph are used
\cite{Gunther_1995aa,Reis_2014aa}:
\begin{definition}[Cutset, loop]
	\cite[Appendix A.1]{Tischendorf_2003aa} Given a graph $G = (V,E)$, with $V$ being the set of all nodes and $E$ the set of all edges,  we define
	\begin{itemize}
		\item a \emph{cutset} as a set of branches $E_\mathrm{c}$ such that its deletion from graph $G$, $G'(V, E\backslash E_\mathrm{c})$ 
		results in a disconnected 
		graph and adding any branch $e_\mathrm{c}\in E_\mathrm{c}$  to $G'$, again leads to a connected graph.
		\item a \emph{loop} as a subgraph $G_\mathrm{l}$ such that it is connected and every node $v_\mathrm{l}$ in $G_\mathrm{l}$ connects exactly 
		two edges of $G_\mathrm{l}$ with each other.
	\end{itemize}
\end{definition}
As the coupling of a field model with a circuit is studied, we introduce a new notation for the branches representing generalised elements 
that will be defined later and that describes our field models. From 
now on, columns of the incidence matrix, currents and voltages corresponding to generalised elements will be denoted by the 
subscript $\lambda$.

In order to ensure existence and uniqueness of the circuit's solution, the following properties for its topology and functions are assumed 
(see 
\cite{Estevez-Schwarz_2000aa}).
\begin{assumption}[Well posedness]\label{ass:unique}
	The MNA circuit equations fulfil
	\begin{itemize}
		\item there are no cutsets containing only current sources, that is,
		\begin{equation*}
		\ker(\ensuremath{ \mathbf{A} }_\mathrm{R} \ \ensuremath{ \mathbf{A} }_\mathrm{C} \ \ensuremath{ \mathbf{A} }_\mathrm{V} \ \ensuremath{ 
		\mathbf{A} }_\mathrm{L} \ \ensuremath{ \mathbf{A} }_\lambda)^{\top} = \{0\}.
		\end{equation*}
		\item there are no loops of voltage sources
		\begin{equation*}
		\ker \ensuremath{ \mathbf{A} }_\mathrm{V} = \{0\}.
		\end{equation*}
		\item the functions describing conductances, inductances and capacitances 
		\begin{align*}
		\mathbf{G}(\ensuremath{ \mathbf{v} }_\mathrm{R}, t)= 
		\frac{\partial \mathbf{g}(\ensuremath{ \mathbf{v} }_\mathrm{R}, t)}{\partial \ensuremath{ \mathbf{v} }_\mathrm{R}}, &&
		\mathbf{L}(\ensuremath{ \mathbf{i} }_\mathrm{L}, t) = \frac{\partial \phi(\ensuremath{ \mathbf{i} }_\mathrm{L}, t)}{\partial \ensuremath{ 
		\mathbf{i} }_{\mathrm{L}}} \text{ and} &&
		\mathbf{C}(\ensuremath{ \mathbf{v} }_\mathrm{C}, t) = \frac{\partial \mathbf{q}(\ensuremath{ \mathbf{v} }_\mathrm{C}, t)}{\partial 
			\ensuremath{ \mathbf{v} }_\mathrm{C}}
		\end{align*}
		are positive definite.
	\end{itemize}
\end{assumption}
The index study of system \eqref{eq:classicmna} under Assumption \ref{ass:unique} has already been carried out e.g. in \cite{Estevez-Schwarz_2000aa}. 
However, a new generalised element is now introduced, which admits the coupling of more complex element-types.

\subsection{Generalised Element}

The following section presents the definition of the generalised element and concludes with index results of the system of DAEs that results when
describing a circuit that contains such elements. This allows to give index statements about circuit systems coupled to DAEs arising from 
refined 
models without having the need of analysing the overall coupled system.

\begin{definition}[Inductance-like element]\label{def:inductlike}
	We define an {inductance-like} element as one described by a DAE
	\begin{equation}
	\ensuremath{ \mathbf{F} }\left(\frac{\mathrm{d}}{\mathrm{d}t}\ensuremath{ \mathbf{x} }_\lambda, \frac{\mathrm{d}}{\mathrm{d}t}\ensuremath{ 
	\mathbf{i} }_\lambda , \ensuremath{ \mathbf{x} }_\lambda, \ensuremath{ \mathbf{i} }_\lambda, \ensuremath{ \mathbf{v} }_\lambda,t\right) = 
	0,\label{eq:induct1}
	\end{equation}
	with $\ensuremath{ \mathbf{x} }_\lambda:\mathcal{I}\rightarrow\mathbbm{R}^{n_\mathrm{dof}}$ and $\ensuremath{ \mathbf{i} }_\lambda, \ensuremath{ 
	\mathbf{v} }_\lambda:\mathcal{I}\rightarrow\mathbbm{R}^{n_\lambda}$ , 
	such that at most one differentiation 
	\begin{equation}
	\frac{\mathrm{d}}{\mathrm{d}t}\ensuremath{ \mathbf{F} }\left(\frac{\mathrm{d}}{\mathrm{d}t}\ensuremath{ \mathbf{x} }_\lambda, 
	\frac{\mathrm{d}}{\mathrm{d}t}\ensuremath{ \mathbf{i} }_\lambda , 
	\ensuremath{ \mathbf{x} }_\lambda, \ensuremath{ \mathbf{i} }_\lambda, \ensuremath{ \mathbf{v} }_\lambda, t\right) = 0 \label{eq:induct2}
	\end{equation}
	is needed to obtain from equations \eqref{eq:induct1}-\eqref{eq:induct2}   a system of the form
	\begin{align}
	\frac{\mathrm{d}}{\mathrm{d}t}\ensuremath{ \mathbf{x} }_\lambda &= \mathbf{f}_{\ensuremath{ \mathbf{x} }}(\ensuremath{ \mathbf{x} }_\lambda, 
	\ensuremath{ \mathbf{i} }_\lambda, \ensuremath{ \mathbf{v} }_\lambda, t)\\
	\frac{\mathrm{d}}{\mathrm{d}t}\phi(\ensuremath{ \mathbf{i} }_\lambda, \ensuremath{ \mathbf{x} }_\lambda, t)&= \mathbf{{f}}_{\phi}(\ensuremath{ 
	\mathbf{x} }_\lambda, \ensuremath{ \mathbf{i} }_\lambda, \mathbf{v}_\lambda, t), 
	\label{eq:induc}
	\end{align}
	with the properties
	\begin{itemize}
		\item $\frac{\partial}{\partial \ensuremath{ \mathbf{i} }_\lambda}\phi(\ensuremath{ \mathbf{i} }_\lambda, \ensuremath{ \mathbf{x} }_\lambda, 
		t)$ is regular.
		\item 	$\frac{\partial}{\partial \mathbf{v}_\lambda}\Big(\left(\frac{\partial \phi}{\partial \ensuremath{ \mathbf{i} 
		}_\lambda}\right)^{-1}\left( 
		-\frac{\partial \phi}{\partial 
			\ensuremath{ \mathbf{x} }_\lambda}\mathbf{f}_{\ensuremath{ \mathbf{x} }} -  \frac{\partial \phi}{\partial t} + \mathbf{{f}}_{\phi}\right)
		\Big)$ is 
		positive 
		definite.
	\end{itemize}
\end{definition}

\begin{remark}
	Crucial in \eqref{eq:induct1} is that the time derivative of the branch voltage $\ensuremath{ \mathbf{v} }_\lambda$ does not appear in the 
	expression.
\end{remark}

\begin{example}
	Two examples for inductance-like devices are 
	\begin{enumerate}
		\item[(a)] classical inductances written as
		\begin{align*}
		\mathbf{v}_\lambda(t) = \ensuremath{ \mathbf{L} } \frac{\mathrm{d}}{\mathrm{d}t}\ensuremath{ \mathbf{i} }_\lambda(t),
		\end{align*}
		with $\ensuremath{ \mathbf{L} }$ positive definite. Here $\ensuremath{ \mathbf{x} }_\lambda = \{ \ \}$, $\mathbf{f}_{\ensuremath{ \mathbf{x} 
		}} = \{ \ \}$, $\phi(\ensuremath{ \mathbf{i} }_\lambda, t) = 
		\ensuremath{ \mathbf{L} }\ensuremath{ \mathbf{i} }_\lambda(t)$ and $\mathbf{f}_\phi = \ensuremath{ \mathbf{v} }_\lambda(t)$.
		\item[(b)]flux-formulated inductances with 
		\begin{align*}
		\Phi(t) &= \phi_L(\ensuremath{ \mathbf{i} }_\lambda, t)\\
		\ensuremath{ \mathbf{v} }_\lambda(t) &= \frac{\mathrm{d}}{\mathrm{d}t} \Phi(t),
		\end{align*}
		where $\ensuremath{ \mathbf{L} }(\ensuremath{ \mathbf{i} }_\lambda, t) := \frac{\partial}{\partial \ensuremath{ \mathbf{i} 
		}_\lambda}\phi_L(\ensuremath{ \mathbf{i} }_\lambda,t)$ is positive definite. Here 
		\begin{align*}
		\ensuremath{ \mathbf{x} }_\lambda&= \Phi(t) &  \mathbf{f}_{\ensuremath{ \mathbf{x} }}(\ensuremath{ \mathbf{v} }_\lambda)&= \ensuremath{ 
		\mathbf{v} }_\lambda(t)\\
		\phi(\ensuremath{ \mathbf{i} }_\lambda, t) &= \phi_\mathrm{L}(\ensuremath{ \mathbf{i} }_\lambda, t) & \mathbf{f}_{\phi}(\ensuremath{ 
		\mathbf{x} }_\lambda) &= 
		\ensuremath{ \mathbf{v} }_\lambda(t).
		\end{align*}
	\end{enumerate}
\end{example}

Like in \cite{Estevez-Schwarz_2000aa}, we define for the index study the projector $\ensuremath{ \mathbf{Q} }_\star$ onto the kernel of $\ensuremath{ 
\mathbf{A} }_\star^{\top}$ and its 
complementary $\ensuremath{ \mathbf{P} }_\star = \ensuremath{ \mathbf{I} } - 
\ensuremath{ \mathbf{Q} }_\star$, which projects onto the support of $\ensuremath{ \mathbf{A} }_\star^{\top}$.
\begin{theorem}[Circuit index]\label{theo:index}
	Given an inductance-like element $\lambda$ following Definition \ref{def:inductlike} which is coupled to a circuit 
	fulfilling 
	assumptions \ref{ass:unique}  with $\ensuremath{ \mathbf{v} }_\lambda = \ensuremath{ \mathbf{A} }_\lambda^{\top} \mathbf{e}$, then  
	the entire system has differential index
	\begin{enumerate}
		\item[(i)] 1 if there are no cutsets containing only inductors, current sources and inductance-like elements (LI$\lambda$-cutsets), that is,  
		$\ker(\ensuremath{ \mathbf{A} }_\mathrm{R} \ \ensuremath{ \mathbf{A} }_\mathrm{C} \ \ensuremath{ \mathbf{A} }_\mathrm{V})^{\top} = \{0\}$ nor 
		loops of voltage sources and capacitances only
		(CV-loops), that is, $\ker\ensuremath{ \mathbf{Q} }_\mathrm{C}^{\top}\ensuremath{ \mathbf{A} }_\mathrm{V} = \{0\} $.
		\item[(ii)] 2, otherwise.
	\end{enumerate}
\end{theorem}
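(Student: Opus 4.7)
The plan is to reduce the proof to the classical MNA index analysis of Estévez--Schwarz and Tischendorf by first showing that, after at most one differentiation of \eqref{eq:induct1}, the generalised element enters the coupled system in structurally the same way as a classical flux-formulated inductor. Once this structural equivalence is established, the topological hypotheses in (i) and (ii) are exactly the standard ones, with the inductor columns $\mathbf{A}_{\mathrm{L}}$ enlarged to $[\mathbf{A}_{\mathrm{L}}\ \mathbf{A}_\lambda]$.

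First I would write out the coupled DAE: MNA \eqref{eq:classicmna} with the additional current contribution $\mathbf{A}_\lambda\mathbf{i}_\lambda$ in the first (KCL) row, together with \eqref{eq:induct1} and the coupling $\mathbf{v}_\lambda=\mathbf{A}_\lambda^{\top}\mathbf{e}$. By Definition~\ref{def:inductlike}, one differentiation yields \eqref{eq:induc}; expanding $\tfrac{\mathrm{d}}{\mathrm{d}t}\phi$ by the chain rule and using the regularity of $\partial\phi/\partial\mathbf{i}_\lambda$, one solves explicitly for
\begin{equation*}
\tfrac{\mathrm{d}}{\mathrm{d}t}\mathbf{i}_\lambda
= \left(\tfrac{\partial\phi}{\partial\mathbf{i}_\lambda}\right)^{-1}\!\Bigl(\mathbf{f}_\phi - \tfrac{\partial\phi}{\partial\mathbf{x}_\lambda}\mathbf{f}_{\mathbf{x}} - \tfrac{\partial\phi}{\partial t}\Bigr) =: \mathbf{h}(\mathbf{x}_\lambda,\mathbf{i}_\lambda,\mathbf{v}_\lambda,t),
\end{equation*}
whose Jacobian $\partial\mathbf{h}/\partial\mathbf{v}_\lambda$ is positive definite by the second bullet of the definition. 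Thus the pair $(\mathbf{x}_\lambda,\mathbf{i}_\lambda)$ behaves as a state variable satisfying an ODE driven by $\mathbf{v}_\lambda=\mathbf{A}_\lambda^{\top}\mathbf{e}$, with a positive-definite ``differential inductance'' $\partial\mathbf{h}/\partial\mathbf{v}_\lambda$ playing exactly the role of $\mathbf{L}^{-1}$ for a classical inductor.

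Next I would decompose $\mathbf{e}$ using the projectors $\mathbf{P}_\mathrm{C},\mathbf{Q}_\mathrm{C}$ and follow the standard two-step analysis. Multiplying the KCL row by $\mathbf{Q}_\mathrm{C}^{\top}$ removes the capacitive derivative term and shows that $\mathbf{Q}_\mathrm{C}^{\top}\mathbf{e}$ is determined algebraically, augmented by $\mathbf{A}_\mathrm{V}^{\top}\mathbf{e}=\mathbf{v}_\mathrm{src}$; the resulting Jacobian is invertible precisely under the CV-loop condition $\ker\mathbf{Q}_\mathrm{C}^{\top}\mathbf{A}_\mathrm{V}=\{0\}$. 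The current $\mathbf{i}_\mathrm{V}$ is recovered by one differentiation of the same equation after projection onto the cokernel of $[\mathbf{A}_\mathrm{R}\ \mathbf{A}_\mathrm{C}\ \mathbf{A}_\mathrm{V}]$; the relevant Schur complement is invertible exactly when the LI$\lambda$-cutset condition $\ker[\mathbf{A}_\mathrm{R}\ \mathbf{A}_\mathrm{C}\ \mathbf{A}_\mathrm{V}]^{\top}=\{0\}$ holds, and the positive-definite Jacobian of $\mathbf{h}$ is what allows $\mathbf{A}_\lambda$ to contribute to this kernel condition on equal footing with $\mathbf{A}_\mathrm{L}$. When both topological hypotheses hold, a single differentiation of \eqref{eq:classicmna} plus the one differentiation already used for the $\lambda$-element suffice to produce an ODE for all variables, giving index $1$. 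If either fails, a second differentiation of the offending equation (the voltage-source equation for CV-loops, the KCL equation projected on the LI$\lambda$-cutset for cutsets) is required, giving index $2$, and one checks that no further differentiation is needed because the remaining variables are already determined.

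The main obstacle I anticipate is verifying that $\mathbf{A}_\lambda$ legitimately joins $\mathbf{A}_\mathrm{L}$ in the cutset criterion. This requires showing that the differentiated $\lambda$-equation delivers $\tfrac{\mathrm{d}}{\mathrm{d}t}\mathbf{i}_\lambda$ as a continuous function of already-known quantities without needing $\tfrac{\mathrm{d}}{\mathrm{d}t}\mathbf{v}_\lambda$ (i.e., without $\tfrac{\mathrm{d}}{\mathrm{d}t}\mathbf{e}$), which is guaranteed by the Remark after Definition~\ref{def:inductlike}, and that the Schur complement assembled from the inductor block $\mathrm{diag}(\mathbf{L}(\mathbf{i}_\mathrm{L},t),\partial\mathbf{h}/\partial\mathbf{v}_\lambda)$ is invertible on $\ker[\mathbf{A}_\mathrm{R}\ \mathbf{A}_\mathrm{C}\ \mathbf{A}_\mathrm{V}]^{\top}$, which follows from the positive-definiteness of both blocks. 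Once this compatibility is checked, the remainder of the argument is a direct transcription of the classical proof.
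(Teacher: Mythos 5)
Your proposal is correct and follows essentially the same route as the paper, which itself only sketches the argument as ``analogous to the classical MNA index proof with the new $\mathbf{A}_\lambda\mathbf{i}_\lambda$ and $\mathbf{F}$ terms added.'' You in fact supply more detail than the published proof, and you correctly isolate the two points on which the reduction hinges: the absence of $\tfrac{\mathrm{d}}{\mathrm{d}t}\mathbf{v}_\lambda$ in \eqref{eq:induct1} and the positive definiteness of $\partial\mathbf{h}/\partial\mathbf{v}_\lambda$, which lets $\mathbf{A}_\lambda$ enter the cutset criterion alongside $\mathbf{A}_\mathrm{L}$.
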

\begin{proof}
	The proof is analogous to the differential index proof in \cite{Estevez-Schwarz_2000aa}, by taking into account the new terms in the system
	\begin{align*}
	\ensuremath{ \mathbf{A} }_\mathrm{C}\frac{\mathrm{d}}{\mathrm{d}t}\mathbf{q}(\ensuremath{ \mathbf{A} }_\mathrm{C}^{\top} 
	\mathbf{e}, t) + 
	\ensuremath{ \mathbf{A} }_\mathrm{R}\mathbf{g}(\ensuremath{ \mathbf{A} }_\mathrm{R}^{\top}\mathbf{e}, t)+ \ensuremath{ \mathbf{A} 
	}_\mathrm{L}\ensuremath{ \mathbf{i} }_\mathrm{L} + 
	\ensuremath{ \mathbf{A} }_\mathrm{V}\ensuremath{ \mathbf{i} }_\mathrm{V} + 
	\boxed{\ensuremath{ \mathbf{A} }_\lambda\ensuremath{ \mathbf{i} }_\lambda} + \ensuremath{ \mathbf{A} }_\mathrm{I}\ensuremath{ \mathbf{i} 
	}_\mathrm{src}(t) = 0& \\[-0.5em]
	\frac{\mathrm{d}}{\mathrm{d}t}\phi_\mathrm{L}(\ensuremath{ \mathbf{i} }_{\mathrm{L}},t) - \ensuremath{ \mathbf{A} }_\mathrm{L}^{\top} \mathbf{e} 
	= 0& \\
	\ensuremath{ \mathbf{A} }_\mathrm{V}^{\top} \mathbf{e} - \ensuremath{ \mathbf{v} }_{\mathrm{src}}(t) = 0& \\
	\boxed{\mathbf{F}\left(\frac{\mathrm{d}}{\mathrm{d}t}\ensuremath{ \mathbf{x} }_\lambda, \frac{\mathrm{d}}{\mathrm{d}t}\ensuremath{ \mathbf{i} 
	}_\lambda , \ensuremath{ \mathbf{x} }_\lambda, \ensuremath{ \mathbf{i} }_\lambda, \ensuremath{ \mathbf{A} }_\lambda^{\top} 
		\mathbf{e}\right)= 0}&
	\end{align*}
	accounting for the inductance-like elements.
\end{proof}

\begin{remark}
	The index results are valid for circuits containing multiple inductance-like elements 
	$$\mathbf{F}_1\left(\frac{\mathrm{d}}{\mathrm{d}t}\ensuremath{ \mathbf{x} }_{\lambda,1}, 
	\frac{\mathrm{d}}{\mathrm{d}t}\ensuremath{ \mathbf{i} }_{\lambda,1} , \ensuremath{ \mathbf{x} }_{\lambda,1}, \ensuremath{ \mathbf{i} 
	}_{\lambda,1}, \ensuremath{ \mathbf{A} }_{\lambda,1}^{\top} 
	\mathbf{e}\right), \ldots, \mathbf{F}_n\left(\frac{\mathrm{d}}{\mathrm{d}t}\ensuremath{ \mathbf{x} }_{\lambda,n}, 
	\frac{\mathrm{d}}{\mathrm{d}t}\ensuremath{ \mathbf{i} }_{\lambda,n} , \ensuremath{ \mathbf{x} }_{\lambda,n}, \ensuremath{ \mathbf{i} 
	}_{\lambda,n}, 
	\ensuremath{ \mathbf{A} }_{\lambda,n}^{\top} 
	\mathbf{e}\right).$$
\end{remark}

\begin{prop}[Linear index-2 components]\label{prop:giwli2}
	Let the DAE of the inductance-like element of Definition \ref{def:inductlike} have the structure
	\begin{align*}
	0 = \mathbf{F}\left(\frac{\mathrm{d}}{\mathrm{d}t}\ensuremath{ \mathbf{x} }_\lambda, \frac{\mathrm{d}}{\mathrm{d}t}\ensuremath{ \mathbf{i} 
	}_\lambda, \ensuremath{ \mathbf{x} }_\lambda, \ensuremath{ \mathbf{i} }_\lambda, \ensuremath{ \mathbf{v} }_\lambda, t\right)
	= \mathbf{\tilde{F}}\left(\frac{\mathrm{d}}{\mathrm{d}t}\ensuremath{ \mathbf{x} }_\lambda, \frac{\mathrm{d}}{\mathrm{d}t}\ensuremath{ \mathbf{i} 
	}_\lambda, \ensuremath{ \mathbf{x} }_\lambda, \ensuremath{ \mathbf{i} }_\lambda, t\right) + \mathbf{B}\ensuremath{ \mathbf{v} }_\lambda,
	\end{align*}
	with $\ensuremath{ \mathbf{B} }\in\mathbbm{R}^{(n_\mathrm{dof} + n_\lambda)\times n_\lambda}$, that is, the voltage term in the original DAE 
	system is linear,  whenever 
	the 
	inductance-like element is contained in an $LI\lambda$ cutset, then, the 
	coupled system of Theorem \ref{theo:index} has linear index-2 components. 
\end{prop}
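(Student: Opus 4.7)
The plan is to instrument the index-2 portion of the proof of Theorem~\ref{theo:index} and track where the inductance-like element's voltage dependence enters. Recall that the index drops to $2$ precisely because of an LI$\lambda$-cutset (or a CV-loop). I will focus on the LI$\lambda$-cutset case since the proposition's hypothesis places the inductance-like element inside such a cutset; the CV-loop contributions are already linear in the classical MNA analysis and are not affected by the element $\lambda$.

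First I would identify the hidden constraint. Multiplying the KCL equation by $\mathbf{Q}_{CRV}^{\top}$, where $\mathbf{Q}_{CRV}$ projects onto $\ker(\mathbf{A}_\mathrm{C}\ \mathbf{A}_\mathrm{R}\ \mathbf{A}_\mathrm{V})^{\top}$, eliminates the capacitive, resistive and voltage-source terms and yields
\begin{equation*}
\mathbf{Q}_{CRV}^{\top}\bigl(\mathbf{A}_\mathrm{L}\mathbf{i}_\mathrm{L} + \mathbf{A}_\lambda \mathbf{i}_\lambda + \mathbf{A}_\mathrm{I}\mathbf{i}_\mathrm{src}(t)\bigr)=0,
\end{equation*}
which is the constraint whose differentiation produces the index-2 components $\mathbf{Q}_{CRV}^{\top}\mathbf{A}_\mathrm{L}\mathbf{i}_\mathrm{L}$ and $\mathbf{Q}_{CRV}^{\top}\mathbf{A}_\lambda \mathbf{i}_\lambda$. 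Following the Est\'evez-Schwarz/Tischendorf bookkeeping, ``linear index-2 components'' means that, after one differentiation of this constraint, the resulting relation determining these currents is an \emph{affine} function of the unknowns (node potentials and state variables), so that the projector separating them acts on a linear subsystem.

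Next I would perform the single differentiation. The derivative of the constraint requires $\mathbf{i}_\mathrm{L}'$ and $\mathbf{i}_\lambda'$. The classical inductor contribution $\mathbf{L}^{-1}\mathbf{A}_\mathrm{L}^{\top}\mathbf{e}$ is already linear in $\mathbf{e}$ under Assumption~\ref{ass:unique}. The new work is to handle $\mathbf{i}_\lambda'$. By hypothesis $\mathbf{F} = \tilde{\mathbf{F}} + \mathbf{B}\mathbf{v}_\lambda$, so replacing $\mathbf{v}_\lambda$ by $\mathbf{A}_\lambda^{\top}\mathbf{e}$ gives
\begin{equation*}
\tilde{\mathbf{F}}\!\left(\tfrac{\mathrm{d}}{\mathrm{d}t}\mathbf{x}_\lambda,\tfrac{\mathrm{d}}{\mathrm{d}t}\mathbf{i}_\lambda,\mathbf{x}_\lambda,\mathbf{i}_\lambda,t\right) = -\mathbf{B}\mathbf{A}_\lambda^{\top}\mathbf{e}.
\end{equation*}
Solving this block (together with the reduced ODE form \eqref{eq:induc} guaranteed by Definition~\ref{def:inductlike}) for $\mathbf{i}_\lambda'$ yields an expression in which the node potential $\mathbf{e}$ enters only through the linear term $\mathbf{B}\mathbf{A}_\lambda^{\top}\mathbf{e}$, while the remaining dependence is on $(\mathbf{x}_\lambda,\mathbf{i}_\lambda,t)$. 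The positive-definiteness property of Definition~\ref{def:inductlike} ensures the relevant inverse exists, and crucially the coefficient in front of $\mathbf{e}$ is state-independent.

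Substituting back, the once-differentiated cutset constraint takes the shape
\begin{equation*}
\mathbf{Q}_{CRV}^{\top}\!\left[\mathbf{M}(\mathbf{x}_\lambda,\mathbf{i}_\lambda,\mathbf{i}_\mathrm{L},t)\,\mathbf{e} + \mathbf{r}(\mathbf{x}_\lambda,\mathbf{i}_\lambda,\mathbf{i}_\mathrm{L},t)\right] = 0,
\end{equation*}
where $\mathbf{M}$ carries the contributions from $\mathbf{A}_\mathrm{L}\mathbf{L}^{-1}\mathbf{A}_\mathrm{L}^{\top}$ and $\mathbf{A}_\lambda\,(\cdot)\,\mathbf{B}\mathbf{A}_\lambda^{\top}$ and is affine (in fact linear) in $\mathbf{e}$. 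This is exactly the statement that the components requiring index reduction appear only linearly; the nonlinearities of $\phi$, $\phi_\mathrm{L}$, $\mathbf{q}$, and $\mathbf{g}$ are relegated to the index-$1$ and differential parts of the system. I would close by invoking the standard decomposition argument (cf.\ \cite{Estevez-Schwarz_2000aa}) to conclude the index-2 components are linear.

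The main obstacle I anticipate is the second step: carefully isolating $\mathbf{i}_\lambda'$ from the coupled block $(\eqref{eq:induct1}$--$\eqref{eq:induc})$ so as to verify that the $\mathbf{v}_\lambda$-dependence propagates linearly through the one differentiation, rather than being contaminated by derivatives of the nonlinear $\phi$ with respect to $\mathbf{x}_\lambda$ or $\mathbf{i}_\lambda$. The linearity hypothesis $\mathbf{F} = \tilde{\mathbf{F}} + \mathbf{B}\mathbf{v}_\lambda$ is exactly what prevents the bilinear coupling of $\mathbf{e}$ with the internal state from appearing in the index-2 reduction.
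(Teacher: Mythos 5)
Your core observation --- that the only place where the inductance-like element can inject a nonlinear dependence on the critical circuit variables is through $\mathbf{v}_\lambda=\mathbf{A}_\lambda^{\top}\mathbf{e}$, and that the hypothesis $\mathbf{F}=\tilde{\mathbf{F}}+\mathbf{B}\mathbf{v}_\lambda$ removes exactly this danger --- is the same one the paper relies on. However, your bookkeeping of what the ``index-2 components'' are, and hence of what must be checked, is off in a way that matters. In the Est\'evez-Schwarz/Tischendorf framework the index-2 components for the LI$\lambda$-cutset case are the node potentials $\mathbf{Q}_\mathrm{CRV}\mathbf{e}$ (and, for CV-loops, the voltage-source currents $\bar{\mathbf{Q}}_\mathrm{V-C}\,\mathbf{i}_\mathrm{V}$), not the current combinations $\mathbf{Q}_\mathrm{CRV}^{\top}\mathbf{A}_\mathrm{L}\mathbf{i}_\mathrm{L}$ and $\mathbf{Q}_\mathrm{CRV}^{\top}\mathbf{A}_\lambda\mathbf{i}_\lambda$; the latter are the hidden constraints. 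Moreover, ``linear index-2 components'' is a property of the \emph{original, undifferentiated} DAE: the index-2 variables must enter it only linearly, which is what licenses the two-implicit-Euler-step consistent initialisation used in Section~\ref{sec:numeric}. The proof is therefore a direct check on the system of Theorem~\ref{theo:index}: since $\mathbf{A}_\mathrm{C}^{\top}\mathbf{Q}_\mathrm{CRV}=\mathbf{A}_\mathrm{R}^{\top}\mathbf{Q}_\mathrm{CRV}=\mathbf{A}_\mathrm{V}^{\top}\mathbf{Q}_\mathrm{CRV}=\mathbf{0}$, the component $\mathbf{Q}_\mathrm{CRV}\mathbf{e}$ occurs only through $\mathbf{A}_\mathrm{L}^{\top}\mathbf{e}$ (linear, as in classical MNA) and through $\mathbf{B}\mathbf{A}_\lambda^{\top}\mathbf{e}$ in the element equation, which is linear with constant coefficient by hypothesis; the remaining index-2 occurrences belong to plain MNA and are linear by \cite{Baumanns_2010aa,Estevez-Schwarz_2000aa}.

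Your detour through the once-differentiated cutset constraint is not needed for this, and it contains a claim that is false in general: the coefficient of $\mathbf{v}_\lambda$ in the reduced equation for $\frac{\mathrm{d}}{\mathrm{d}t}\mathbf{i}_\lambda$ is \emph{not} state-independent --- in the structured form of Proposition~\ref{prop:induct-like} it is $\mathbf{L}_\lambda(\mathbf{x}_\lambda)^{-1}$, which depends on $\mathbf{x}_\lambda$ (and Definition~\ref{def:inductlike} allows $\phi$ to depend nonlinearly on $\mathbf{x}_\lambda$ and $\mathbf{i}_\lambda$). This does not endanger the proposition, precisely because linearity is demanded of the undifferentiated equations rather than of the differentiated constraint; but as written, your argument establishes a different (and for the purposes of \cite{Baumanns_2010aa} insufficient) property than the one being claimed.
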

\begin{proof}
	Analogous to the differential index proof in \cite{Estevez-Schwarz_2000aa}, one can see that the index-2 components are the node 
	potentials in $LI\lambda$ cutsets, that is, $\ensuremath{ \mathbf{Q} }_{CRV}\mathbf{e}$, where $\ensuremath{ \mathbf{Q} }_\mathrm{CRV}$ is a 
	projector onto 
	$\ker \left(\ensuremath{ \mathbf{A} }_\mathrm{C} \ \ensuremath{ \mathbf{A} }_\mathrm{R} \ \ensuremath{ \mathbf{A} }_\mathrm{V}\right)^{\top}$, 
	and the currents in branches containing voltage sources in 
	CV-loops, that is $\bar{\ensuremath{ \mathbf{Q} }}_\mathrm{V-C}\ensuremath{ \mathbf{i} }_{\mathrm{V}}$, with $\bar{\ensuremath{ \mathbf{Q} 
	}}_\mathrm{V-C}$ a projector onto 
	$\ker\ensuremath{ \mathbf{Q} }_\mathrm{C}^{\top}\ensuremath{ \mathbf{A} }_\mathrm{V}$. If the voltage $\ensuremath{ \mathbf{A} 
	}_\lambda^{\top}\mathbf{e}$  in the original DAE of the 
	inductance-like device is thus linear, then the 
	possible 
	index-2 component $\ensuremath{ \mathbf{A} }_\lambda ^{\top}\ensuremath{ \mathbf{Q} }_\mathrm{CRV}\mathbf{e}$ is linear. The other possible 
	index-2 components are part 
	of the original MNA equations and it has already been shown previously (see e.g. \cite{Baumanns_2010aa}) that they are linear.
\end{proof}
Now that we know inductance-like elements behave (from the index point-of-view) like an inductance in a circuit, two more complex examples of such 
type of 
elements with practical 
relevance will be presented in the following: the spatially discretised magnetoquasistatic models in A*  and T-$\Omega$ formulations.

\section{Refined Systems}\label{sec:field}

The electromagnetic field in a magnetoquasistatic approximation is defined by Maxwell's equations for the eddy current problem \cite{Jackson_1998aa}
\begin{subequations}\label{eq:maxwell}
	\begin{align}
	\nabla \times\ensuremath{\vec{E}} &= - \frac{\partial}{\partial t}\ensuremath{\vec{B}} \label{eq:faraday} \\
	\nabla \times \ensuremath{\vec{H}} &= \ensuremath{\vec{J}} \label{eq:ampere}\\ 
	\nabla \cdot\ensuremath{\vec{B}} &= 0, \label{eq:monopole}
	\end{align}
\end{subequations}
where the time derivative of the electric flux density is disregarded with respect to $\ensuremath{\vec{J}}$ ($\frac{\partial }{\partial 
t}\ensuremath{\vec{D}} = 0$) in 
Maxwell-Amp\`ere's equation \eqref{eq:ampere}. 
Here, 
$\ensuremath{\vec{E}}$ is the electric field strength, $\ensuremath{\vec{B}}$ the magnetic flux density, $\ensuremath{\vec{H}}$ the magnetic field 
strength and $\ensuremath{\vec{J}}$ the electric 
current density. All quantities are vector fields $\mathcal{I}\times\Omega\rightarrow \mathbbm{R}^3$ depending on time and space, where 
$\Omega\subset\mathbbm{R}^3$. The quantities are related 
via the material equations
\begin{align*}
\ensuremath{\vec{J}} = \sigma \ensuremath{\vec{E}} + \ensuremath{\vec{J}}_\mathrm{s}	&&	\ensuremath{\vec{H}} = \nu\ensuremath{\vec{B}}.
\end{align*}
The nonnegative conductivity $\sigma$ and the positive reluctivity $\nu = \mu^{-1}$ depend on space and their dependence on the fields is for now 
disregarded 
for simplicity 
of notation. $\ensuremath{\vec{J}}_\mathrm{s}$ is the source current density.

\begin{assumption}[Domain see Figure \ref{fig:domain}]\label{ass:domain}
	The domain $\Omega\subset\mathbbm{R}^3$ has two types of subdomains, the source domains $\Omega_\mathrm{s}^{(r)}$, $r =1,\dots,n_\mathrm{s}$ and 
	the 
	conducting domain $\Omega_\mathrm{c}$. They fulfil the following 
	properties.
	\begin{itemize}
		\item $\Omega$ is contractible (see \cite{Bossavit_1998aa}).
		\item All subdomains are disjoint, that is,
		\begin{align*}
		\Omega_\mathrm{s}^{(i)}\cap\Omega_\mathrm{s}^{(j)} = \emptyset, \text{ for }i\neq j && \text{and} && 
		\Omega_\mathrm{c}\cap\Omega_\mathrm{s}^{(j)} = \emptyset \text{ }\forall j.
		\end{align*}
		\item The conductivity $\sigma$ is positive in $\Omega_\mathrm{c}$ and zero everywhere else.
		\item The source current density $\ensuremath{\vec{J}}_\mathrm{s}$ is only nonzero in $\displaystyle{\Omega_\mathrm{s} = 
			\bigcup_i\Omega_\mathrm{s}^{(i)}}$.
	\end{itemize}
\end{assumption}

\begin{figure}[h]
	\centering
	\includegraphics[scale=1]{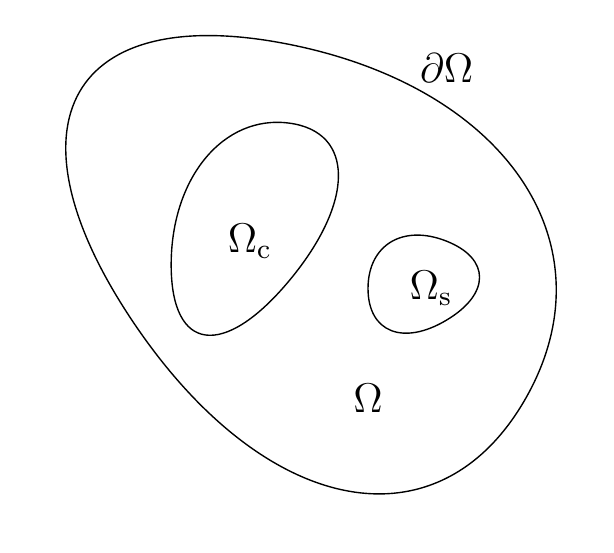}
	\caption{Sketch of domain.}\label{fig:domain}
\end{figure}

\subsection{A* and T-$\Omega$ Formulations}
Typically, Maxwell's equations are formulated by defining potentials \cite{Biro_1995aa}. In the T-$\Omega$ formulation 
\cite{Albanese_1991aa,Webb_1993aa}, 
an electric vector potential 
$\vec{T}:\mathcal{I}\times\Omega_\mathrm{c}\rightarrow\mathbbm{R}^3$ only on the conducting domain $\Omega_\mathrm{c}$
and a magnetic scalar potential  $\psi:\mathcal{I}\times\Omega\rightarrow\mathbbm{R}^3$ on the entire domain $\Omega$ are defined, such that 
\begin{align*}
\ensuremath{\vec{J}}_\mathrm{c}= \nabla \times\vec{T} && \text{and} && \ensuremath{\vec{H}} = \ensuremath{\vec{H}}_\mathrm{s} + \vec{T} -\nabla\psi,
\end{align*}
where $\ensuremath{\vec{J}}_\mathrm{c} = \sigma\ensuremath{\vec{E}}$ is the conduction current density and $\ensuremath{\vec{H}}_\mathrm{s}$ can be 
thought of as a source magnetic field 
with $\nabla \times\ensuremath{\vec{H}}_\mathrm{s} = \ensuremath{\vec{J}}_\mathrm{s}$. The following system of partial differential equations (PDEs) 
arises
\begin{equation}\label{eq:tcont}
\begin{aligned}
\nabla \times \rho \nabla \times \vec{T} + \mu \frac{\partial }{\partial t}\vec{T} - \mu \frac{\partial}{\partial t }\nabla\psi + 
\mu\frac{\partial}{\partial 
	t}\ensuremath{\vec{H}}_\mathrm{s}&=0 && \text{in }\Omega_\mathrm{c}\\
\nabla \cdot \mu \vec{T} -\nabla \cdot\left(\mu\nabla\psi\right) + \nabla \cdot\mu\ensuremath{\vec{H}}_\mathrm{s} &=0 &&\text{in }\Omega,
\end{aligned}
\end{equation}
where $\rho:\mathcal{I}\times\Omega_\mathrm{c}\rightarrow\mathbbm{R}^3$ is the specific resistance $\sigma^{-1}$.

Another possibility is the  A-$\varphi$ formulation. Here, a magnetic vector potential 
$\ensuremath{\vec{A}}:\mathcal{I}\times\Omega\rightarrow\mathbbm{R}^3$ and an 
electric scalar potential $\varphi:\mathcal{I}\times\Omega\rightarrow\mathbbm{R}$ are introduced (see \cite{Kameari_1990aa,Russenschuck_2010aa}), 
such that
\begin{align*}
\ensuremath{\vec{B}} = \nabla \times\ensuremath{\vec{A}} && \text{and} && \ensuremath{\vec{E}} = -\frac{\partial}{\partial t}\ensuremath{\vec{A}} - 
\nabla\varphi.
\end{align*}
The gauge freedom allows to choose a magnetic vector potential which leads to the A* formulation, where $\ensuremath{\vec{E}} = 
-\frac{\partial}{\partial 
	t}\ensuremath{\vec{A}}$. This yields the following PDE
\begin{equation}\label{eq:acont}
\sigma \frac{\partial }{\partial t}\ensuremath{\vec{A}} + \nabla \times\left(\nu \nabla \times \ensuremath{\vec{A}} \right) = 
\ensuremath{\vec{J}}_\mathrm{s}.
\end{equation}

In the simplest case electric boundary conditions are set at $\partial\Omega$, that is, the tangential component of the electric field is zero 
$\ensuremath{\vec{E}}_t = 0$. For 
$\vec{n}$  the unit vector normal to $\partial \Omega$ this 
translates 
into setting zero Neumann boundary conditions for the magnetic 
scalar 
potential $\psi$ 
\begin{align*}
\mu\nabla\psi\cdot\vec{n} = 0,
\end{align*}
as long as $\Omega_\mathrm{c}\cap\partial\Omega = \emptyset$ and the magnetic source function is chosen to be 
$\mu\ensuremath{\vec{H}}_\mathrm{s}\cdot\vec{n}=0$,
and zero Dirichlet boundary conditions for the magnetic vector potential $\ensuremath{\vec{A}}$
\begin{align*}
\vec{n}\times\ensuremath{\vec{A}} = 0.
\end{align*} Also the tangential component of the electric 
vector potential  $\vec{T}$ is set to zero at $\partial\Omega_\mathrm{c}$.

\subsection{Circuit Coupling}
In order to couple the three dimensional system of field equations with the zero dimensional circuit's 
equations, 
winding functions  \cite{Schops_2013aa} 
are introduced. They distribute the currents or voltages of the circuit on the field's domain $\Omega$. 

There are different types of conductor models 
that lead to winding functions with different properties \cite{Bedrosian_1993aa,Schops_2013aa}. We will consider the stranded 
conductor model, where a 
divergence-free winding function $\vec{\chi}_\mathrm{s}:\Omega_\mathrm{s}\rightarrow \mathbbm{R}^{3\times n_\mathrm{s} }$ is constructed, such that 
for 
each 
coil $j$
\begin{align*}
\ensuremath{\vec{J}}_\mathrm{s}^{(j)} = \vec{\chi}_\mathrm{s}^{(j)}i_\mathrm{s}^{(j)},
\end{align*}
where $i_\mathrm{s}^{(j)}$ is the current through the coil, $\sup(\ensuremath{\vec{J}}_\mathrm{s}^{(j)}) = \Omega_\mathrm{s}^{(j)}$ and 
\begin{equation*}
\ensuremath{\vec{J}}_\mathrm{s} = 
\sum_j\vec{\chi}_\mathrm{s}^{(j)}i_\mathrm{s}^{(j)} = \vec{\chi}_\mathrm{s}\ensuremath{ \mathbf{i} }_{\mathrm{s}}.
\end{equation*}

In the case of the T-$\Omega$ formulation, a function $\vec{\zeta}_\mathrm{s}: \Omega \rightarrow \mathbbm{R}^{3\times n_\mathrm{s} } $ is defined 
with 
$\nabla \times\vec{\zeta}_\mathrm{s} = 
\vec{\chi}_\mathrm{s}$ and 
thus
\begin{align*}
\ensuremath{\vec{H}}_\mathrm{s} = \vec{\zeta}_\mathrm{s}\ensuremath{ \mathbf{i} }_\mathrm{s}.
\end{align*}
We start by deriving the coupling equation \cite{Zhou_2008aa}  with the
definition of voltage as
\begin{align*}
\ensuremath{ \mathbf{v} }_{\mathrm{s}} = -\int_{\Omega}\vec{\chi}_\mathrm{s} \cdot \ensuremath{\vec{E}} \, \mathrm{d}\Omega.
\end{align*}
Using Gauss's theorem and Faraday-Lenz's law \eqref{eq:faraday}, we obtain
\begin{align*}
\ensuremath{ \mathbf{v} }_{\mathrm{s}} = \frac{\mathrm{d}}{\mathrm{d}t}\int_{\Omega}\vec{\zeta}_\mathrm{s}\cdot \ensuremath{\vec{B}} \, 
\mathrm{d}\Omega - 
\int_{\partial \Omega} (\vec{\zeta}_\mathrm{s}\times \ensuremath{\vec{E}}) \cdot \mathrm{d}\vec{S},
\end{align*}
which, due to the electric boundary conditions leads to the coupling equation
\begin{align*}
\ensuremath{ \mathbf{v} }_{\mathrm{s}} = \int_{\Omega}\vec{\zeta}_\mathrm{s} \cdot \frac{\mathrm{d}}{\mathrm{d}t}\mu(\vec{T} - \nabla\psi + 
\vec{\zeta}_\mathrm{s}\ensuremath{ \mathbf{i} }_\mathrm{s})  \, 
\mathrm{d}\Omega.
\end{align*} 

In the A* formulation, the voltage of the circuit can be related to the field quantities (see \cite{Schops_2013aa}) via 
\begin{align*}
\ensuremath{ \mathbf{v} }_{\mathrm{s}} =	\frac{\mathrm{d}}{\mathrm{d}t}\int_{\Omega}\vec{\chi}_\mathrm{s} \cdot \ensuremath{\vec{A}} \, 
\mathrm{d}\Omega.
\end{align*}

The degrees of freedom of both formulations are on dual sides of the diagram in Figure \ref{fig:maxhouse} and therefore it is said that both 
formulations are complementary.  Those type of 
systems can be used for an error approximation of the discretisation method 
\cite{Albanese_1991aa}. 
\begin{figure}[h]
	\centering
	\includegraphics[scale=1]{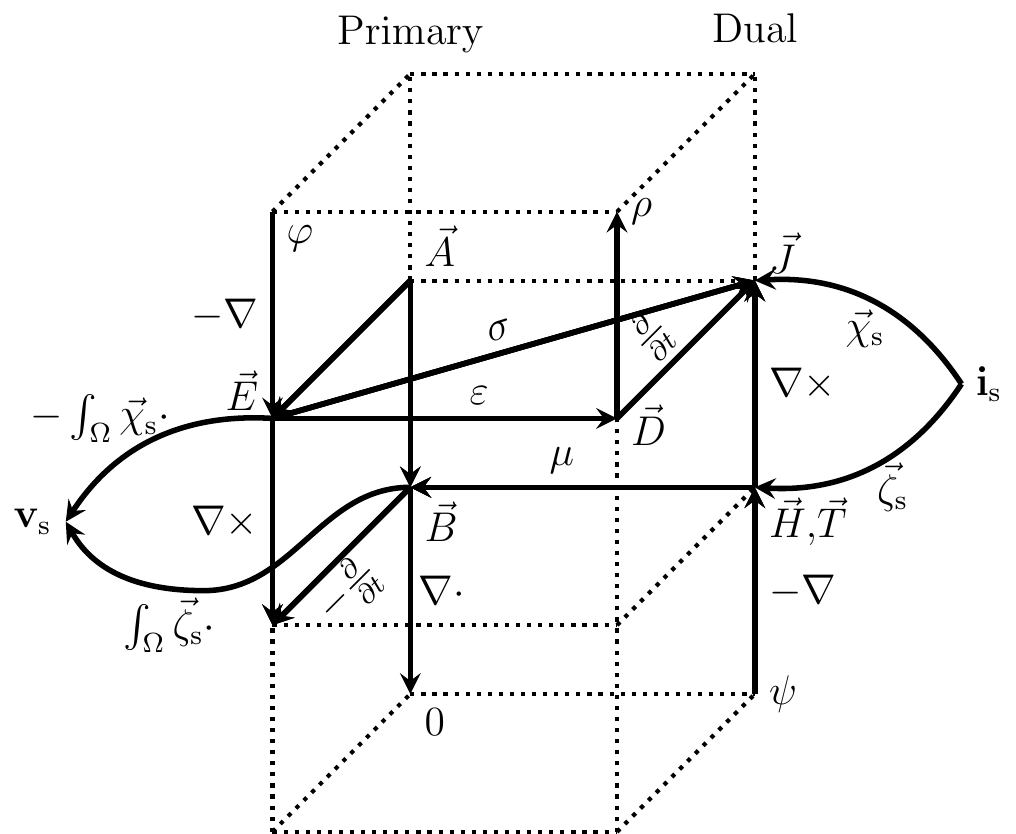}
	\caption{Maxwell House diagram, based on \cite{Schops_2013aa}.}\label{fig:maxhouse}
\end{figure}

\subsection{Discretised Systems}\label{sec:discreteSystems}
First the field model is discretised in space with a suitable method, such as the Finite Element Method (FEM) \cite{Monk_1993aa} with appropriate 
basis functions 
that 
fulfil the exact discrete de Rham 
sequence \cite{Bossavit_1998aa} or the Finite Integration Technique (FIT) \cite{Weiland_1977aa}. 

For the Finite Element discretisation with H(curl)-conforming basis functions $\vec{\nu}_i:\Omega\rightarrow\mathbbm{R}^3$, 
($i=1,\ldots,n_\mathrm{dof}$), 
the weak formulation in the 
case 
of 
the magnetic vector potential is \cite{Alonso-Rodriguez_2010aa}
\begin{align*}
\int_{\Omega}\sigma \frac{\partial \ensuremath{\vec{A}}}{\partial t}\cdot \vec{\nu}_i + \Big(\nu\nabla 
\times\ensuremath{\vec{A}}\Big)\cdot\Big(\nabla \times\vec{\nu}_i\Big) \, 
\mathrm{d}\Omega &= 
\int_{\Omega}\vec{\chi}_\mathrm{s}\ensuremath{ \mathbf{i} }_{\mathrm{s}}\cdot\vec{\nu}_i \, \mathrm{d}\Omega \\
\frac{\mathrm{d}}{\mathrm{d}t}\int_{\Omega}\vec{\chi}_\mathrm{s} \cdot \ensuremath{\vec{A}} \, \mathrm{d}\Omega &= \ensuremath{ \mathbf{v} 
}_{\mathrm{s}},
\end{align*}
for all $i$.
With the Ritz-Galerkin method the magnetic vector potential is approximated  by 
$$ \ensuremath{\vec{A}} \dot{=} \sum_{i=1}^{n_\mathrm{dof}}a_i(t)\vec{\nu}_i $$ 
and the conductivity matrix is for example constructed as
\begin{align}\label{eq:Msigma}
(\ensuremath{\mathbf{M}_{\sigma}})_{i,j} = \int_{\Omega}\sigma\vec{\nu}_i\cdot\vec{\nu}_j \, \mathrm{d}\Omega.
\end{align}
The T-$\Omega$ weak formulation as well as the rest of the material matrices are built analogously with the appropriate basis functions.

Eventually, the semi-discrete 
T-$\Omega$ formulation
\begin{align}\label{eq:tomega}
\ensuremath{\mathbf{C}}^{\top}\ensuremath{ \mathbf{M} }_\rho\ensuremath{\mathbf{C}}\mathbf{t} + 
\ensuremath{\mathbf{M}_{\mu}}\frac{\mathrm{d}}{\mathrm{d}t}\mathbf{t} + 
\ensuremath{\mathbf{M}_{\mu}}\ensuremath{\widetilde{\mathbf{S}}}^{\top}\frac{\mathrm{d}}{\mathrm{d}t}\Psi + 
\ensuremath{\mathbf{M}_{\mu}}\mathbf{Y}_\mathrm{s}\frac{\mathrm{d}}{\mathrm{d}t}\ensuremath{ \mathbf{i} }_\mathrm{s} & = 0 \nonumber \\
\ensuremath{\widetilde{\mathbf{S}}}\ensuremath{\mathbf{M}_{\mu}}\mathbf{t} + 
\ensuremath{\widetilde{\mathbf{S}}}\ensuremath{\mathbf{M}_{\mu}}\ensuremath{\widetilde{\mathbf{S}}}^{\top} \Psi + 
\ensuremath{\widetilde{\mathbf{S}}}\ensuremath{\mathbf{M}_{\mu}}\mathbf{Y}_\mathrm{s}\ensuremath{ \mathbf{i} }_\mathrm{s} &=0\\
\mathbf{Y}_\mathrm{s}^{\top}\ensuremath{\mathbf{M}_{\mu}}\frac{\mathrm{d}}{\mathrm{d}}\mathrm{t} + 
\mathbf{Y}_\mathrm{s}^{\top}\ensuremath{\mathbf{M}_{\mu}}\ensuremath{\widetilde{\mathbf{S}}}^{\top}\frac{\mathrm{d}}{\mathrm{d}t}\Psi + 
\mathbf{Y}_\mathrm{s}^{\top}\ensuremath{\mathbf{M}_{\mu}}\mathbf{Y}_\mathrm{s}\frac{\mathrm{d}}{\mathrm{d}t}\ensuremath{ \mathbf{i} }_\mathrm{s}  - 
\ensuremath{ \mathbf{v} }_{\mathrm{s}} &=0 
\nonumber
\end{align}
and the A* formulation
\begin{equation}\label{eq:astar}
\begin{aligned}
\ensuremath{\mathbf{M}_{\sigma}}\frac{\mathrm{d}}{\mathrm{d}t}\ensuremath{ \mathbf{a} } + 
\ensuremath{\mathbf{C}}^{\top}\ensuremath{\mathbf{M}_{\nu}}\ensuremath{\mathbf{C}}\ensuremath{ \mathbf{a} } - \ensuremath{ \mathbf{X} 
}_\mathrm{s}\ensuremath{ \mathbf{i} }_\mathrm{s} &=0\\
\frac{\mathrm{d}}{\mathrm{d}t}\ensuremath{ \mathbf{X} }_\mathrm{s}^{\top}\ensuremath{ \mathbf{a} } - \ensuremath{ \mathbf{v} }_{\mathrm{s}} &= 0,
\end{aligned}
\end{equation}
lead to two different systems of DAEs describing the same physical phenomenon.
Here, $\ensuremath{ \mathbf{M} }_\star$ are the material matrices that describe the material relations between the discrete quantities, 
$\ensuremath{\mathbf{C}}$, $-\ensuremath{\widetilde{\mathbf{S}}}^{\top}$ 
and  
$\ensuremath{\widetilde{\mathbf{S}}}$ are the discrete curl, gradient and divergence operators. $\ensuremath{ \mathbf{X} }_\mathrm{s}$ and 
$\mathbf{Y}_\mathrm{s}$  are the discretisations of the winding functions $\vec{\chi}_\mathrm{s}$ and  $\vec{\zeta}_\mathrm{s}$ respectively. The in 
the FEM notation not very common 
matrix factorisation of systems \eqref{eq:tomega} and \eqref{eq:astar} borrowed from the Finite Integration Technique \cite{Weiland_1996aa}  is  
used for convenience in the analysis below.

In order to solve both systems, consistent initial conditions are imposed for $\ensuremath{ \mathbf{a} }(t_0) = \ensuremath{ \mathbf{a} }_0$, 
$\mathbf{t}(t_0) = \mathbf{t}_0$, $\Psi(t_0) = 
\Psi_0$ and 
either $\ensuremath{ \mathbf{v} }_{\mathrm{s}}(t_0)=\ensuremath{ \mathbf{v} }_{\mathrm{s},0}$ or $\ensuremath{ \mathbf{i} 
}_{\mathrm{s}}(t_0)=\ensuremath{ \mathbf{i} }_{\mathrm{s},0}$, depending on which lumped quantity is 
given as an excitation to the system. To ensure uniqueness of solution, also discrete gauging conditions are inserted (see e.g. 
\cite{Clemens_2002aa,Rubinacci_1988aa}).

\begin{prop}[System matrices]\label{property:matrices}
	The discretisation matrices have the following properties.
	\begin{itemize}
		\item 	The material matrices $\ensuremath{ \mathbf{M} }_\star$ are symmetric positive definite for $\star = \{\mu, \nu\}$ and symmetric 
		positive semidefinite for 
		$\star = \{\sigma\}$. 
		\item The discrete gradient matrix $-\ensuremath{\widetilde{\mathbf{S}}}^{\top}$ is assumed to be projected to a subspace where the boundary 
		conditions are imposed to the 
		degrees of freedom and thus has full column rank, that is, $\ker\ensuremath{\widetilde{\mathbf{S}}}^{\top} = \{\mathbf{0}\}$.
		\item For the mentioned suitable discretisations, $\ensuremath{\mathbf{C}}\ensuremath{\widetilde{\mathbf{S}}}^{\top} = \mathbf{0}$.
	\end{itemize}
\end{prop}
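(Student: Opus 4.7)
The proof splits cleanly into three largely independent parts, one per bullet, all of which rely on standard properties of the de Rham complex inherited by the chosen discretisation. My plan is to treat them in the order stated, reducing each matrix identity to a pointwise or combinatorial fact about the underlying basis functions and topology.

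For the first bullet, I would write each material matrix in the generic form $(\mathbf{M}_\star)_{i,j}=\int_{\Omega}\star\,\vec{\nu}_i\cdot\vec{\nu}_j\,\mathrm{d}\Omega$ (or the analogous pairing for the Whitney forms of the appropriate degree used for $\mathbf{M}_\mu$), which is manifestly symmetric. Then for any vector $\mathbf{x}$, $\mathbf{x}^{\top}\mathbf{M}_\star\mathbf{x}=\int_{\Omega}\star\,|\sum_i x_i\vec{\nu}_i|^{2}\,\mathrm{d}\Omega$. Positivity of $\mu$ and $\nu$ together with the linear independence of the basis on $\Omega$ gives positive definiteness, while $\sigma\geq0$ (and $\sigma=0$ outside $\Omega_{\mathrm c}$ by Assumption~\ref{ass:domain}) yields only positive semidefiniteness for $\mathbf{M}_\sigma$, since basis functions supported entirely in $\Omega\setminus\Omega_{\mathrm c}$ contribute zero.

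For the second bullet, I would recall that $-\widetilde{\mathbf{S}}^{\top}$ is the discrete gradient acting on nodal degrees of freedom. On a connected mesh its kernel coincides with the constant nodal vector, which is the discrete counterpart of $\ker\nabla=\{c\in\mathbb{R}\}$. The Dirichlet/gauge conditions mentioned in the statement amount to removing at least one nodal degree of freedom (or restricting to a subspace orthogonal to the constants); the only constant vector surviving this restriction is the zero vector, so $\ker\widetilde{\mathbf{S}}^{\top}=\{\mathbf{0}\}$ and full column rank follows. The third bullet is the discrete analogue of $\nabla\times\nabla\equiv 0$: for a discretisation fulfilling the exact discrete de Rham sequence (mentioned in Section~\ref{sec:discreteSystems}), the image of the discrete gradient lies in the kernel of the discrete curl, which is exactly $\mathbf{C}\widetilde{\mathbf{S}}^{\top}=\mathbf{0}$. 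I would cite Bossavit's construction (already referenced as \cite{Bossavit_1998aa}) for the FEM/Whitney-form case and the corresponding incidence-matrix identity for FIT.

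The only step I expect to require some care is the second one: positive semidefiniteness for $\mathbf{M}_\sigma$ must be stated carefully, since one might be tempted to try to upgrade to positive definiteness on a reduced subspace; but for the downstream index analysis only semidefiniteness is needed, so I would resist that temptation. The rest is bookkeeping once the choice of basis functions and the boundary/gauge subspace are fixed, and the proof can be kept very short by appealing directly to the cited discrete de Rham results.
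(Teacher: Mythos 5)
Your proposal is correct and follows essentially the same route as the paper, which justifies the proposition only by a brief remark: for FIT the properties are classical (citing Weiland), and for FEM the first bullet follows from the Galerkin construction of the material matrices as in equation~\eqref{eq:Msigma} while the second and third follow from the discrete de Rham sequence with boundary conditions. You simply spell out the standard details (quadratic-form argument for the mass matrices, kernel of the discrete gradient being the constants removed by the boundary/gauge restriction, and the discrete $\nabla\times\nabla\equiv 0$ identity) that the paper leaves implicit.
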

For FIT all three properties are classical results (see e.g. \cite{Weiland_1996aa}). In the case of FEM, the first property follows 
directly from how the material matrices are constructed analogously to equation \eqref{eq:Msigma}. Both the second and third properties follows from 
the fact that the spaces spanned by the basis functions including boundary conditions fulfil the de Rham sequence.

\section{DAE Index Analysis}\label{sec:index}
Before heading to the index results of both field-circuit coupled systems, a 
specific structured inductance-like element, that eases the later analysis, is introduced.
\begin{prop}[Inductance-like element]\label{prop:induct-like}
	A device described by a DAE
	\begin{equation*}
	\ensuremath{ \mathbf{F} }\left(\frac{\mathrm{d}}{\mathrm{d}t}\ensuremath{ \mathbf{x} }_\lambda, \frac{\mathrm{d}}{\mathrm{d}t}\ensuremath{ 
	\mathbf{i} }_\lambda, \ensuremath{ \mathbf{x} }_\lambda, \ensuremath{ \mathbf{i} }_\lambda, \ensuremath{ \mathbf{v} }_\lambda, t\right) = 0, 
	\end{equation*}
	where at most one differentiation $\frac{\mathrm{d}}{\mathrm{d}t}\ensuremath{ \mathbf{F} }\left(\frac{\mathrm{d}}{\mathrm{d}t}\ensuremath{ 
	\mathbf{x} }_\lambda, \frac{\mathrm{d}}{\mathrm{d}t}\ensuremath{ \mathbf{i} }_\lambda, \ensuremath{ \mathbf{x} }_\lambda, 
	\ensuremath{ \mathbf{i} }_\lambda, \ensuremath{ \mathbf{v} }_\lambda, t\right) = 0$ is needed, such that one can write
	\begin{align}
	\frac{\mathrm{d}}{\mathrm{d}t}\ensuremath{ \mathbf{x} }_\lambda &= \mathbf{f}_{\ensuremath{ \mathbf{x} }}(\ensuremath{ \mathbf{x} }_\lambda, 
	\ensuremath{ \mathbf{i} }_\lambda, \ensuremath{ \mathbf{v} }_\lambda, t)\\
	\frac{\mathrm{d}}{\mathrm{d}t}\ensuremath{ \mathbf{i} }_\lambda &= \ensuremath{ \mathbf{L} }_\lambda(\ensuremath{ \mathbf{x} 
	}_\lambda)^{-1}\ensuremath{ \mathbf{v} }_\lambda + \mathbf{f}_{\ensuremath{ \mathbf{i} }}(\ensuremath{ \mathbf{x} }_\lambda, 
	\ensuremath{ \mathbf{i} }_\lambda, t),\label{eq:ilambda}
	\end{align}
	with $\ensuremath{ \mathbf{L} }_\lambda(\ensuremath{ \mathbf{x} }_\lambda)$ being positive definite, is an inductance-like device. 
\end{prop}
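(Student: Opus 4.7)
The plan is to check that the system described in Proposition~\ref{prop:induct-like} fits Definition~\ref{def:inductlike} by exhibiting an explicit flux function $\phi$. Since the hypothesis already guarantees that exactly one differentiation of $\mathbf{F}$ yields an explicit $\frac{d}{dt}\mathbf{x}_\lambda$ and $\frac{d}{dt}\mathbf{i}_\lambda$ system, the only remaining task is to recast the current equation \eqref{eq:ilambda} into the inductive form $\frac{d}{dt}\phi(\mathbf{i}_\lambda,\mathbf{x}_\lambda,t) = \mathbf{f}_\phi(\mathbf{x}_\lambda,\mathbf{i}_\lambda,\mathbf{v}_\lambda,t)$ and verify the two regularity/positive-definiteness conditions.

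First, I would propose the natural linear choice
\[
\phi(\mathbf{i}_\lambda,\mathbf{x}_\lambda,t) := \mathbf{L}_\lambda(\mathbf{x}_\lambda)\,\mathbf{i}_\lambda .
\]
With this choice $\frac{\partial \phi}{\partial \mathbf{i}_\lambda} = \mathbf{L}_\lambda(\mathbf{x}_\lambda)$, which by hypothesis is positive definite and in particular regular; this discharges the first bullet of Definition~\ref{def:inductlike}. The $\mathbf{x}_\lambda$-dependence is absorbed into $\frac{\partial \phi}{\partial \mathbf{x}_\lambda}$ and there is no explicit time dependence, so $\frac{\partial \phi}{\partial t} = 0$.

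Second, I would multiply \eqref{eq:ilambda} on the left by $\mathbf{L}_\lambda(\mathbf{x}_\lambda)$ and add the chain-rule term $\frac{\partial \phi}{\partial \mathbf{x}_\lambda}\mathbf{f}_{\mathbf{x}}$ to both sides, so that the left-hand side becomes $\frac{d}{dt}\phi(\mathbf{i}_\lambda,\mathbf{x}_\lambda,t)$ after substituting $\frac{d\mathbf{x}_\lambda}{dt} = \mathbf{f}_{\mathbf{x}}$. Defining
\[
\mathbf{f}_\phi(\mathbf{x}_\lambda,\mathbf{i}_\lambda,\mathbf{v}_\lambda,t) := \mathbf{v}_\lambda + \mathbf{L}_\lambda(\mathbf{x}_\lambda)\mathbf{f}_{\mathbf{i}}(\mathbf{x}_\lambda,\mathbf{i}_\lambda,t) + \frac{\partial \phi}{\partial \mathbf{x}_\lambda}\mathbf{f}_{\mathbf{x}}(\mathbf{x}_\lambda,\mathbf{i}_\lambda,\mathbf{v}_\lambda,t) ,
\]
the system takes exactly the form \eqref{eq:induc}.

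Finally, I would check the positive-definiteness condition by direct computation. Substituting the expressions for $\phi$ and $\mathbf{f}_\phi$ gives
\[
\left(\frac{\partial \phi}{\partial \mathbf{i}_\lambda}\right)^{-1}\!\!\left(-\frac{\partial \phi}{\partial \mathbf{x}_\lambda}\mathbf{f}_{\mathbf{x}} - \frac{\partial \phi}{\partial t} + \mathbf{f}_\phi\right) = \mathbf{L}_\lambda(\mathbf{x}_\lambda)^{-1}\mathbf{v}_\lambda + \mathbf{f}_{\mathbf{i}}(\mathbf{x}_\lambda,\mathbf{i}_\lambda,t),
\]
so its derivative with respect to $\mathbf{v}_\lambda$ is $\mathbf{L}_\lambda(\mathbf{x}_\lambda)^{-1}$, which is positive definite because $\mathbf{L}_\lambda(\mathbf{x}_\lambda)$ is. This closes both bullets of the definition, and the device is inductance-like. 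There is no real obstacle here; the only mildly delicate point is guessing that the linear flux ansatz $\phi = \mathbf{L}_\lambda \mathbf{i}_\lambda$ works, after which the verification reduces to an unwrapping of the chain rule.
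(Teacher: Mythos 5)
Your proposal is correct and follows essentially the same route as the paper's own proof: the same linear flux ansatz $\phi = \mathbf{L}_\lambda(\mathbf{x}_\lambda)\mathbf{i}_\lambda$, the same choice of $\mathbf{f}_\phi$ absorbing the chain-rule term, and the same observation that the $\mathbf{v}_\lambda$-derivative collapses to $\mathbf{L}_\lambda^{-1}(\mathbf{x}_\lambda)$. Nothing is missing.
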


\begin{proof}
	We define $\phi(\ensuremath{ \mathbf{i} }_\lambda, \ensuremath{ \mathbf{x} }_\lambda, t) = \ensuremath{ \mathbf{L} }_{\lambda}(\ensuremath{ 
	\mathbf{x} }_\lambda)\ensuremath{ \mathbf{i} }_\lambda$. The first property is thus fulfilled, as
	\begin{equation*}
	\frac{\partial}{\partial \ensuremath{ \mathbf{i} }_\lambda}\phi(\ensuremath{ \mathbf{x} }_\lambda, \ensuremath{ \mathbf{i} }_\lambda, t) = 
	\ensuremath{ \mathbf{L} }_\lambda(\ensuremath{ \mathbf{x} }_\lambda),
	\end{equation*}
	which is positive definite and therefore regular. Also, setting
	\begin{align*}
	\mathbf{{f}}_{\phi}(\ensuremath{ \mathbf{x} }_\lambda, \ensuremath{ \mathbf{i} }_\lambda, \ensuremath{ \mathbf{v} }_\lambda, t)= 
	\ensuremath{ \mathbf{v} }_\lambda + \frac{\partial}{\partial \ensuremath{ \mathbf{x} }_\lambda}\big(\ensuremath{ \mathbf{L} 
	}_\lambda(\ensuremath{ \mathbf{x} }_\lambda)\ensuremath{ \mathbf{i} }_\lambda\big)
	\mathbf{f}_{\ensuremath{ \mathbf{x} }}(\ensuremath{ \mathbf{x} }_\lambda, \ensuremath{ \mathbf{i} }_\lambda, \ensuremath{ \mathbf{v} }_\lambda, 
	t) + 
	\ensuremath{ \mathbf{L} }_\lambda(\ensuremath{ \mathbf{x} }_\lambda)\mathbf{f}_{\ensuremath{ \mathbf{i} }}(\ensuremath{ \mathbf{x} }_\lambda, 
	\ensuremath{ \mathbf{i} }_\lambda, t)
	\end{align*}
	leads to equation \eqref{eq:ilambda}, where $\frac{\partial}{\partial 
		\ensuremath{ \mathbf{v} }_\lambda}\Big(\left(\frac{\partial \phi}{\partial \ensuremath{ \mathbf{i} }_\lambda}\right)^{-1}\left( 
	-\frac{\partial \phi}{\partial 
		\ensuremath{ \mathbf{x} }_\lambda}\mathbf{f}_{\ensuremath{ \mathbf{x} }} -  \frac{\partial \phi}{\partial t} + \mathbf{{f}}_{\phi}\right)
	\Big) = \ensuremath{ \mathbf{L} }_{\lambda}^{-1}(\ensuremath{ \mathbf{x} }_\lambda)$ is again positive definite and fulfils the second property 
	of an 
	inductance-like element. 
\end{proof}

\subsection{DAE Index of the T-$\Omega$  Formulation}\label{sec:T-Omega}
Let the tree-cotree gauge \cite{Albanese_1988aa}
be introduced. For a simply connected region $\Omega_\mathrm{c}$, the values of the degrees of freedom $\mathbf{t}$ are set to zero on a tree 
$T_\mathrm{c}$ of the mesh 
inside the conducting 
region $\Omega_\mathrm{c}$ that adequately takes care of the boundary conditions.
For that, a projector $\bar{\ensuremath{ \mathbf{P} }}$ is defined that projects onto the edges of the cotree of $T_\mathrm{c}$. $\ensuremath{ 
\mathbf{P} }$ is the reduction of the 
projection matrix $\bar{\ensuremath{ \mathbf{P} }}$ by deleting all the zero columns. In case of a multiply-connected region, cuts have to be defined 
in 
$\Omega_\mathrm{c}$  to ensure a correct gauging condition (see \cite{Zhou_2008aa}).
\begin{assumption}[Gauged T-$\Omega$ formulation]\label{ass:treecotree}
	The discretised T-$\Omega$ system \eqref{eq:tomega} is gauged and thus rewritten as
	\begin{subequations}\label{eq:tomegauge}
		\begin{align}
		\ensuremath{ \mathbf{P} }^{\top}\ensuremath{\mathbf{C}}^{\top}\ensuremath{ \mathbf{M} }_\rho\ensuremath{\mathbf{C}} \ensuremath{ \mathbf{P} 
		}\mathbf{t}_\mathrm{red} + 
		\ensuremath{ \mathbf{P} }^{\top}\ensuremath{\mathbf{M}_{\mu}}\left(\ensuremath{ \mathbf{P} 
		}\frac{\mathrm{d}}{\mathrm{d}t}\mathbf{t}_\mathrm{red} + 
		\ensuremath{\widetilde{\mathbf{S}}}^{\top}\frac{\mathrm{d}}{\mathrm{d}t}\Psi + 
		\mathbf{Y}_\mathrm{s}\frac{\mathrm{d}}{\mathrm{d}t}\ensuremath{ \mathbf{i} }_\mathrm{s}\right) & = 0\label{eq:tomega1} \\
		\ensuremath{\widetilde{\mathbf{S}}}\ensuremath{\mathbf{M}_{\mu}}\left(\ensuremath{ \mathbf{P} }\mathbf{t}_\mathrm{red} + 
		\ensuremath{\widetilde{\mathbf{S}}}^{\top} \Psi + \mathbf{Y}_\mathrm{s}\ensuremath{ \mathbf{i} }_\mathrm{s}\right) &=0\label{eq:tomega2}\\
		\mathbf{Y}_\mathrm{s}^{\top}\ensuremath{\mathbf{M}_{\mu}}\left(\ensuremath{ \mathbf{P} }\frac{\mathrm{d}}{\mathrm{d}t}\mathbf{t}_\mathrm{red} 
		+ \ensuremath{\widetilde{\mathbf{S}}}^{\top}\frac{\mathrm{d}}{\mathrm{d}t}\Psi + 
		\mathbf{Y}_\mathrm{s}\frac{\mathrm{d}}{\mathrm{d}t}\ensuremath{ \mathbf{i} }_\mathrm{s} \right) - \ensuremath{ \mathbf{v} }_{\mathrm{s}} 
		&=0.\label{eq:tomega3} 
		\end{align}
	\end{subequations}
	such that
	the matrix $\ensuremath{ \mathbf{K} }_\rho = \ensuremath{ \mathbf{P} }^{\top}\ensuremath{\mathbf{C}}^{\top}\ensuremath{ \mathbf{M} 
	}_\rho\ensuremath{\mathbf{C}} \ensuremath{ \mathbf{P} }$ has full rank, i.e. $\det(\ensuremath{ \mathbf{K} }_\rho) \neq 0$.
\end{assumption}
\begin{prop}\label{prop:PS}
	The discrete field $\ensuremath{ \mathbf{P} }\ensuremath{ \mathbf{x} }$ is not a gradient field, i.e. $\ensuremath{ \mathbf{P} }\ensuremath{ 
	\mathbf{x} }\neq \ensuremath{\widetilde{\mathbf{S}}}^{\top}\mathbf{y}$ for $\ensuremath{ \mathbf{x} }, \mathbf{y}\neq\mathbf{0}$.
\end{prop}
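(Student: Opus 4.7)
The plan is to prove the contrapositive: if $\mathbf{P}\mathbf{x} = \widetilde{\mathbf{S}}^{\top}\mathbf{y}$, then $\mathbf{x} = \mathbf{y} = \mathbf{0}$. The two tools I expect to use are (a) the combinatorial meaning of $\mathbf{P}$ as an edge-selector for the cotree of $T_c$, and (b) the full column rank of $-\widetilde{\mathbf{S}}^{\top}$ guaranteed by Proposition \ref{property:matrices}.

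First I would unpack the structure of $\mathbf{P}\mathbf{x}$. By construction the columns of $\bar{\mathbf{P}}$ are indicator vectors of cotree edges of $T_c$, so after deletion of the zero columns and for every $\mathbf{x}$ the edge-vector $\mathbf{P}\mathbf{x}$ vanishes identically on (i) every tree edge of $T_c$, and (ii) every edge that does not lie in $\Omega_c$. Using the standard action $(\widetilde{\mathbf{S}}^{\top}\mathbf{y})_{e} = y_{v_2}-y_{v_1}$ of the discrete gradient on an edge $e=(v_1,v_2)$, the assumed equality says that $\mathbf{y}$ takes the same value at both endpoints of every edge in classes (i) and (ii).

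Next I would run a connectivity argument to promote this local equality to a global one. Since $T_c$ is a connected spanning tree of the nodes of $\Omega_c$, step (i) already forces $\mathbf{y}$ to be constant on all nodes of $\Omega_c$. Combined with (ii), and using that $\Omega$ is contractible (hence connected) by Assumption \ref{ass:domain} and that removing only cotree edges of $T_c$—all of which lie strictly inside $\Omega_c$—cannot disconnect the mesh, a walk along tree edges of $T_c$ plus edges outside $\Omega_c$ propagates the constancy of $\mathbf{y}$ to every node of the whole mesh. Then on the remaining (cotree) edges, whose endpoints both lie in $V_c$, the gradient also vanishes, so $\widetilde{\mathbf{S}}^{\top}\mathbf{y} = \mathbf{0}$ throughout.

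The final step is immediate: by the second bullet of Proposition \ref{property:matrices}, $\ker\widetilde{\mathbf{S}}^{\top}=\{\mathbf{0}\}$ after imposition of the boundary conditions, so $\mathbf{y}=\mathbf{0}$; consequently $\mathbf{P}\mathbf{x}=\mathbf{0}$, and since the columns of $\mathbf{P}$ are distinct edge-indicators they are linearly independent, yielding $\mathbf{x}=\mathbf{0}$. The main obstacle I expect is the graph-theoretic propagation step: it uses implicitly that $\Omega_c$ is simply connected, consistent with Assumption \ref{ass:treecotree}; in the multiply connected case one would additionally need the cuts in $\Omega_c$ mentioned in the gauging discussion in order to preclude loops that thread through the conductor and obstruct propagation of the constant.
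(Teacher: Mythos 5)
Your argument is correct in substance, but it takes a genuinely different route from the one the paper intends. The paper's one-line proof ("follows from Assumption~\ref{ass:treecotree} and Property~\ref{property:matrices}") is a purely algebraic observation: apply the discrete curl to the hypothesised identity $\mathbf{P}\mathbf{x}=\widetilde{\mathbf{S}}^{\top}\mathbf{y}$; since $\mathbf{C}\widetilde{\mathbf{S}}^{\top}=\mathbf{0}$ one gets $\mathbf{C}\mathbf{P}\mathbf{x}=\mathbf{0}$, and the nonsingularity of $\mathbf{K}_\rho=\mathbf{P}^{\top}\mathbf{C}^{\top}\mathbf{M}_\rho\mathbf{C}\mathbf{P}$ required by Assumption~\ref{ass:treecotree} forces $\ker(\mathbf{C}\mathbf{P})=\{\mathbf{0}\}$, hence $\mathbf{x}=\mathbf{0}$ and then $\mathbf{y}=\mathbf{0}$ from $\ker\widetilde{\mathbf{S}}^{\top}=\{\mathbf{0}\}$. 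Your combinatorial proof instead unpacks \emph{why} the tree--cotree gauge works: it treats $\widetilde{\mathbf{S}}^{\top}$ as a node-to-edge difference operator and $\mathbf{P}$ as a cotree-edge selector, and propagates constancy of $\mathbf{y}$ through the mesh. This is more illuminating (it effectively explains where the nonsingularity of $\mathbf{K}_\rho$ comes from rather than assuming it), but it costs you extra structural hypotheses the paper never states formally — the incidence-matrix form of $\widetilde{\mathbf{S}}^{\top}$ and the indicator-column form of $\mathbf{P}$ hold for FIT and lowest-order edge elements but not for general discretisations satisfying Property~\ref{property:matrices} — and, notably, it never actually uses the content of Assumption~\ref{ass:treecotree} ($\det\mathbf{K}_\rho\neq 0$) that the paper cites. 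Two small imprecisions worth tightening: the step ``$\mathbf{y}$ constant $\Rightarrow\widetilde{\mathbf{S}}^{\top}\mathbf{y}=\mathbf{0}\Rightarrow\mathbf{y}=\mathbf{0}$'' is slightly circular in the reduced (boundary-condition-eliminated) setting, where a nonzero constant does \emph{not} have vanishing reduced gradient on edges adjacent to eliminated nodes — but those edges lie outside the cotree support, so the hypothesis already forces the constant to be zero there; and the connectivity propagation does not in fact need simple connectedness of $\Omega_\mathrm{c}$ (replacing any deleted cotree edge by its tree path in $T_\mathrm{c}$ preserves connectivity regardless of topology; the cuts are needed for gauge completeness, not for this step).
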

\begin{proof}
	The Proposition follows directly from Assumption \ref{ass:treecotree} and Property \ref{property:matrices}.
\end{proof}

\begin{prop}[Discrete Helmholtz split]\label{prop:decomp}
	Every $\ensuremath{ \mathbf{x} } \in \mathbbm{R}^n$ can be written as $\ensuremath{ \mathbf{x} } = \ensuremath{\widetilde{\mathbf{S}}}^{\top} 
	\ensuremath{ \mathbf{x} }_1 + \ensuremath{\mathbf{M}_{\mu}}^{-1}\ensuremath{\mathbf{C}}^{\top}\ensuremath{ \mathbf{x} }_2$, where 
	$\ensuremath{ \mathbf{x} }_1 \in \mathbbm{R}^m$, $\ensuremath{ \mathbf{x} }_2\in\mathbbm{R}^{n}$, with 
	$\ensuremath{\widetilde{\mathbf{S}}}^{\top}\in\mathbbm{R}^{n\times m}$ and $\ensuremath{\mathbf{C}} \in \mathbbm{R}^{n\times n}$ being the 
	matrices 
	defined in Section \ref{sec:discreteSystems}. 
\end{prop}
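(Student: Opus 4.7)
The plan is to establish the decomposition as an $\mathbf{M}_\mu$-orthogonal direct sum, using the symmetric positive definite matrix $\mathbf{M}_\mu$ to define the inner product $\langle \mathbf{u}, \mathbf{v} \rangle_\mu := \mathbf{u}^\top \mathbf{M}_\mu \mathbf{v}$ on $\mathbb{R}^n$. I would first verify that the two candidate subspaces $V_1 := \operatorname{range}(\widetilde{\mathbf{S}}^\top)$ and $V_2 := \operatorname{range}(\mathbf{M}_\mu^{-1} \mathbf{C}^\top)$ are $\mathbf{M}_\mu$-orthogonal. This is immediate from Property \ref{property:matrices}: for any $\mathbf{x}_1, \mathbf{x}_2$ one has
\begin{equation*}
\langle \widetilde{\mathbf{S}}^\top \mathbf{x}_1, \mathbf{M}_\mu^{-1} \mathbf{C}^\top \mathbf{x}_2 \rangle_\mu
= \mathbf{x}_1^\top \widetilde{\mathbf{S}}\, \mathbf{M}_\mu\, \mathbf{M}_\mu^{-1} \mathbf{C}^\top \mathbf{x}_2
= \mathbf{x}_1^\top (\mathbf{C}\widetilde{\mathbf{S}}^\top)^\top \mathbf{x}_2 = 0,
\end{equation*}
so $V_1 \cap V_2 = \{0\}$ and the sum $V_1 + V_2$ is direct.

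Next I would count dimensions. Since $\ker \widetilde{\mathbf{S}}^\top = \{0\}$ by Property \ref{property:matrices}, the matrix $\widetilde{\mathbf{S}}^\top \in \mathbb{R}^{n \times m}$ has rank $m$, so $\dim V_1 = m$. Because $\mathbf{M}_\mu^{-1}$ is invertible, $\dim V_2 = \operatorname{rank}(\mathbf{C}^\top) = \operatorname{rank}(\mathbf{C})$.

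To conclude, I would invoke the exact discrete de Rham sequence, which the chosen basis functions fulfil (as stated in Section~\ref{sec:discreteSystems}) on the contractible domain $\Omega$ (Assumption \ref{ass:domain}). Exactness at the middle node means $\ker \mathbf{C} = \operatorname{range}(\widetilde{\mathbf{S}}^\top) = V_1$, which has dimension $m$; by rank-nullity applied to $\mathbf{C} \in \mathbb{R}^{n \times n}$ this gives $\operatorname{rank}(\mathbf{C}) = n - m$. Therefore $\dim(V_1 \oplus V_2) = m + (n - m) = n$, and the orthogonal direct sum exhausts $\mathbb{R}^n$, yielding the claimed representation.

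The main obstacle is the dimension argument, since the decomposition relies in an essential way on the exactness of the discrete de Rham sequence, i.e.\ the discrete Poincar\'e lemma $\ker \mathbf{C} \subseteq \operatorname{range}(\widetilde{\mathbf{S}}^\top)$. The inclusion in the other direction is trivial from $\mathbf{C}\widetilde{\mathbf{S}}^\top = 0$, but the converse is a genuine topological statement that depends on the contractibility of $\Omega$ and on the choice of basis functions; without it the orthogonal complement of $V_1$ could be strictly larger than $V_2$ and the decomposition would fail.
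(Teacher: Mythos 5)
Your proof is correct and is essentially the paper's own argument: the paper carries out the same $\mathbf{M}_\mu$-weighted orthogonal splitting via the substitution $\mathbf{y}=\mathbf{M}_\mu^{1/2}\mathbf{x}$ together with the identity $\ker(\widetilde{\mathbf{S}}\mathbf{M}_\mu^{1/2})=\mathrm{im}(\mathbf{M}_\mu^{-1/2}\mathbf{C}^{\top})$, which is precisely the orthogonality-plus-dimension-count fact you establish. If anything, your version makes explicit that the complementarity of the two subspaces rests on exactness of the discrete de Rham sequence ($\ker\mathbf{C}=\mathrm{im}\,\widetilde{\mathbf{S}}^{\top}$), a point the paper's proof uses but does not spell out.
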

\begin{proof}
	As $\ensuremath{ \mathbf{M} }_\mu$ is positive definite and  
	$\ker(\ensuremath{\widetilde{\mathbf{S}}}\ensuremath{\mathbf{M}_{\mu}}^{\frac{1}{2}}$)$= 
	\mathrm{im}$($\ensuremath{\mathbf{M}_{\mu}}^{-\frac{1}{2}}\ensuremath{\mathbf{C}}^{\top}$), we have
	\begin{align*}
	\mathbf{y} = \ensuremath{\mathbf{M}_{\mu}}^{\frac{1}{2}}\ensuremath{\widetilde{\mathbf{S}}}^{\top}\mathbf{y}_1 + 
	\ensuremath{\mathbf{M}_{\mu}}^{-\frac{1}{2}}\ensuremath{\mathbf{C}}^{\top}\mathbf{y}_2,
	\end{align*}
	for all $\mathbf{y}\in \mathbbm{R}^n$. Furthermore, there exists  a
	$\mathbf{y}_0$ such that $\ensuremath{ \mathbf{x} } = \ensuremath{\mathbf{M}_{\mu}}^{-\frac{1}{2}}\mathbf{y}_0 = 
	\ensuremath{\widetilde{\mathbf{S}}}^{\top} \ensuremath{ \mathbf{x} }_1 + 
	\ensuremath{\mathbf{M}_{\mu}}^{-1}\ensuremath{\mathbf{C}}^{\top}\ensuremath{ \mathbf{x} }_2$.
\end{proof}

\begin{assumption}[Discrete current densities]\label{ass:curlYP}
	It is assumed that
	\begin{align*}
	\mathbf{0}\neq\ensuremath{\mathbf{C}}\mathbf{Y}_\mathrm{s}\mathbf{x} \neq \ensuremath{\mathbf{C}}\ensuremath{ \mathbf{P} }\mathbf{y}, \text{ for 
	}\ensuremath{ \mathbf{x} },\mathbf{y}\neq \mathbf{0},
	\end{align*}
	where we recall that $\mathbf{Y}_\mathrm{s}$ is the discrete winding function of the T-$\Omega$ formulation.
\end{assumption}
The previous assumption imposes that the curl of the discretised magnetic source field, which is the discretised source current density 
$\mathbf{j}^{(r)}_\mathrm{s}$ associated with $\Omega^{(r)}_\mathrm{s}$, is different from the curl of the discretised electric vector potential 
$\mathbf{t}_\mathrm{red}$, which corresponds to the conduction current density $\mathbf{j}_\mathrm{c}$ associated with  $\Omega_\mathrm{c}$. As 
$\Omega_\mathrm{s}^{(i)}\cap\Omega_\mathrm{c}=\emptyset$ for all $i$, the assumption is reasonable. 

\begin{prop}[T-$\Omega$ inductance-like element]\label{prop:tomega}
	The discrete (gauged) system of equations of the T-$\Omega$ formulation with circuit coupling equation \eqref{eq:tomegauge}  is an 
	inductance-like element. 
\end{prop}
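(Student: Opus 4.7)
The plan is to apply Proposition~\ref{prop:induct-like} with $\ensuremath{ \mathbf{i} }_\lambda = \ensuremath{ \mathbf{i} }_\mathrm{s}$, $\ensuremath{ \mathbf{v} }_\lambda = \ensuremath{ \mathbf{v} }_{\mathrm{s}}$, and internal state $\ensuremath{ \mathbf{x} }_\lambda = (\mathbf{t}_\mathrm{red},\Psi)$. Only equation~\eqref{eq:tomega2} is purely algebraic in $\mathbf{t}_\mathrm{red},\Psi,\ensuremath{ \mathbf{i} }_\mathrm{s}$, and differentiating it once puts \eqref{eq:tomega1}--\eqref{eq:tomega3} into the equivalent block form
\begin{equation*}
\mathbf{A}^{\top}\ensuremath{\mathbf{M}_{\mu}}\mathbf{A}\begin{pmatrix}\dot{\mathbf{t}}_\mathrm{red}\\ \dot{\Psi}\\ \dot{\ensuremath{ \mathbf{i} }}_\mathrm{s}\end{pmatrix}
=\begin{pmatrix}-\ensuremath{ \mathbf{K} }_\rho\mathbf{t}_\mathrm{red}\\ \mathbf{0}\\ \ensuremath{ \mathbf{v} }_{\mathrm{s}}\end{pmatrix},\qquad \mathbf{A}:=[\ensuremath{ \mathbf{P} }\ \ensuremath{\widetilde{\mathbf{S}}}^{\top}\ \mathbf{Y}_\mathrm{s}],
\end{equation*}
so a single differentiation suffices, matching the bound in the definition of an inductance-like element. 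The task thus reduces to inverting this block matrix and reading off the coefficient of $\ensuremath{ \mathbf{v} }_\mathrm{s}$ in the last block row.

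The crux is to show that $\mathbf{A}$ has full column rank, which, since $\ensuremath{\mathbf{M}_{\mu}}$ is symmetric positive definite, upgrades $\mathbf{A}^{\top}\ensuremath{\mathbf{M}_{\mu}}\mathbf{A}$ to symmetric positive definite and hence invertible. Starting from a relation $\ensuremath{ \mathbf{P} }\mathbf{a}+\ensuremath{\widetilde{\mathbf{S}}}^{\top}\mathbf{b}+\mathbf{Y}_\mathrm{s}\mathbf{c}=\mathbf{0}$, I would apply the discrete curl $\ensuremath{\mathbf{C}}$; Property~\ref{property:matrices} kills the middle term via $\ensuremath{\mathbf{C}}\ensuremath{\widetilde{\mathbf{S}}}^{\top}=\mathbf{0}$, leaving $\ensuremath{\mathbf{C}}\mathbf{Y}_\mathrm{s}\mathbf{c}=\ensuremath{\mathbf{C}}\ensuremath{ \mathbf{P} }(-\mathbf{a})$. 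Assumption~\ref{ass:curlYP} forbids this with $\mathbf{c}\neq\mathbf{0}$, hence $\mathbf{c}=\mathbf{0}$. The residual identity $\ensuremath{ \mathbf{P} }\mathbf{a}=\ensuremath{\widetilde{\mathbf{S}}}^{\top}(-\mathbf{b})$ is then forbidden for nonzero arguments by Proposition~\ref{prop:PS}; combined with the full column rank of $\ensuremath{ \mathbf{P} }$ (it is a reduction of the tree-cotree projector with zero columns removed) and of $\ensuremath{\widetilde{\mathbf{S}}}^{\top}$ (Property~\ref{property:matrices}), this yields $\mathbf{a}=\mathbf{b}=\mathbf{0}$.

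With invertibility of $\mathbf{A}^{\top}\ensuremath{\mathbf{M}_{\mu}}\mathbf{A}$ in hand, the Schur complement of its top-left $2\times 2$ block, call it $\ensuremath{ \mathbf{L} }_\lambda$, is automatically symmetric positive definite, so the last block row of the inverse gives
\begin{equation*}
\dot{\ensuremath{ \mathbf{i} }}_\mathrm{s}=\ensuremath{ \mathbf{L} }_\lambda^{-1}\ensuremath{ \mathbf{v} }_{\mathrm{s}}+\mathbf{f}_i(\mathbf{t}_\mathrm{red},t),
\end{equation*}
while the first two block rows express $\dot{\mathbf{t}}_\mathrm{red}$ and $\dot{\Psi}$ as explicit functions $\mathbf{f}_x(\ensuremath{ \mathbf{x} }_\lambda,\ensuremath{ \mathbf{i} }_\mathrm{s},\ensuremath{ \mathbf{v} }_\mathrm{s},t)$. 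This is exactly the template of Proposition~\ref{prop:induct-like}, so the element is inductance-like. I expect the main obstacle to be the full-rank argument for $\mathbf{A}$: it is the only place where the discrete de~Rham relation $\ensuremath{\mathbf{C}}\ensuremath{\widetilde{\mathbf{S}}}^{\top}=\mathbf{0}$, the physical disjointness of $\Omega_\mathrm{s}$ and $\Omega_\mathrm{c}$ encoded in Assumption~\ref{ass:curlYP}, and the tree-cotree gauge through Proposition~\ref{prop:PS} must all cooperate; the remainder is essentially Schur-complement bookkeeping.
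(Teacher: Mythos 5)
Your proposal is correct, and it arrives at literally the same inductance matrix as the paper: by the quotient property of Schur complements, the Schur complement of the leading $2\times 2$ block of $\mathbf{A}^{\top}\mathbf{M}_{\mu}\mathbf{A}$ with $\mathbf{A}=[\mathbf{P}\ \widetilde{\mathbf{S}}^{\top}\ \mathbf{Y}_\mathrm{s}]$ equals the paper's $\mathbf{L}_\lambda=\mathbf{Y}_\mathrm{s}^{\top}\mathbf{W}_\mathrm{P}\mathbf{Y}_\mathrm{s}$, which the paper builds by eliminating $\Psi$ first (producing $\mathbf{W}$) and $\mathbf{t}_\mathrm{red}$ second. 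Where you genuinely diverge is in how positive definiteness is certified. The paper runs two separate arguments: positive definiteness of $\mathbf{P}^{\top}\mathbf{W}\mathbf{P}$ via a projector/square-root factorisation and Proposition~\ref{prop:PS}, and then positive definiteness of $\mathbf{Y}_\mathrm{s}^{\top}\mathbf{W}_\mathrm{P}\mathbf{Y}_\mathrm{s}$ via the discrete Helmholtz split (Proposition~\ref{prop:decomp}) combined with Assumption~\ref{ass:curlYP}. You replace both by a single lemma --- full column rank of $\mathbf{A}$ --- proved by hitting the null relation with $\mathbf{C}$ and using $\mathbf{C}\widetilde{\mathbf{S}}^{\top}=\mathbf{0}$, Assumption~\ref{ass:curlYP} (correctly covering both the $\mathbf{a}=\mathbf{0}$ and $\mathbf{a}\neq\mathbf{0}$ cases) and Proposition~\ref{prop:PS}, plus the full column rank of $\mathbf{P}$ and $\widetilde{\mathbf{S}}^{\top}$. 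This buys you something real: Proposition~\ref{prop:decomp} becomes unnecessary, the invertibility of the leading block needed for the Schur complement comes for free from positive definiteness of the whole matrix, and the symmetry and positive definiteness of $\mathbf{L}_\lambda$ are automatic rather than verified by hand. The price is only that the block structure of the right-hand side $(-\mathbf{K}_\rho\mathbf{t}_\mathrm{red},\mathbf{0},\mathbf{v}_\mathrm{s})$ must be tracked through the inverse to confirm that $\mathbf{f}_i$ is independent of $\mathbf{v}_\mathrm{s}$, which you do, so the reduced equations match the template of Proposition~\ref{prop:induct-like} exactly.
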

\begin{proof}
	By differentiating equation \eqref{eq:tomega2} once, one can extract
	\begin{equation}\label{eq:proofpsi}
	\frac{\mathrm{d}}{\mathrm{d}t}\Psi = -\ensuremath{ \mathbf{L} 
	}_\mu^{-1}\ensuremath{\widetilde{\mathbf{S}}}\ensuremath{\mathbf{M}_{\mu}}\ensuremath{ \mathbf{P} 
	}\frac{\mathrm{d}}{\mathrm{d}t}\mathbf{t}_\mathrm{red} 
	- \ensuremath{ \mathbf{L} 
	}_\mu^{-1}\ensuremath{\widetilde{\mathbf{S}}}\ensuremath{\mathbf{M}_{\mu}}\mathbf{Y}_\mathrm{s}\frac{\mathrm{d}}{\mathrm{d}t}\ensuremath{ 
	\mathbf{i} }_{\mathrm{s}},
	\end{equation}
	with $\ensuremath{ \mathbf{L} }_\mu = \ensuremath{\widetilde{\mathbf{S}}}\ensuremath{\mathbf{M}_{\mu}}\ensuremath{\widetilde{\mathbf{S}}}^{\top}$ 
	positive definite due to Property \ref{property:matrices}. 
	
	In order to obtain an expression $\frac{\mathrm{d}}{\mathrm{d}t}\mathbf{t}_\mathrm{red}$, first the positive definiteness
	of the matrix $\ensuremath{ \mathbf{P} }^{\top} \mathbf{W}\ensuremath{ \mathbf{P} }$, with $\mathbf{W} = \ensuremath{\mathbf{M}_{\mu}} - 
	\ensuremath{\mathbf{M}_{\mu}}\ensuremath{\widetilde{\mathbf{S}}}^{\top}\ensuremath{ \mathbf{L} 
	}_\mu^{-1}\ensuremath{\widetilde{\mathbf{S}}}\ensuremath{\mathbf{M}_{\mu}}$ is shown.
	\begin{align*}
	\mathbf{W} =	\ensuremath{\mathbf{M}_{\mu}}^{\frac{1}{2}}(\ensuremath{ \mathbf{I} } - 
	\ensuremath{\mathbf{M}_{\mu}}^{\frac{1}{2}}\ensuremath{\widetilde{\mathbf{S}}}^{\top}\ensuremath{ \mathbf{L} 
	}_\mu^{-1}\ensuremath{\widetilde{\mathbf{S}}}\ensuremath{\mathbf{M}_{\mu}}^{\frac{1}{2}})\ensuremath{\mathbf{M}_{\mu}}^{\frac{1}{2}} = 
	\ensuremath{\mathbf{M}_{\mu}}^{\frac{1}{2}}\ensuremath{ \mathbf{Q} }_\mu\ensuremath{\mathbf{M}_{\mu}}^{\frac{1}{2}},
	\end{align*}
	where $\ensuremath{ \mathbf{Q} }_\mu$ is a projector and thus positive semidefinite. 
	Let us assume there exists an $\ensuremath{ \mathbf{x} }\neq0$ with $\ensuremath{ \mathbf{x} }^{\top}\ensuremath{ \mathbf{P} }^{\top} 
	\mathbf{W}\ensuremath{ \mathbf{P} }\ensuremath{ \mathbf{x} } = 0$. Then $ 
	\mathbf{W}^{\frac{1}{2}}\ensuremath{ \mathbf{P} }\ensuremath{ \mathbf{x} } = \mathbf{0}$ and $\mathbf{W}\ensuremath{ \mathbf{P} }\ensuremath{ 
	\mathbf{x} } = \mathbf{0}$ and thus
	$\ensuremath{ \mathbf{P} }\ensuremath{ \mathbf{x} } = \ensuremath{\widetilde{\mathbf{S}}}^{\top}\ensuremath{ \mathbf{L} 
	}_\mu^{-1}\ensuremath{\widetilde{\mathbf{S}}}\ensuremath{\mathbf{M}_{\mu}}\ensuremath{ \mathbf{P} }\ensuremath{ \mathbf{x} }$, which cannot be 
	due to Proposition \ref{prop:PS}.
	
	Secondly, inserting \eqref{eq:proofpsi} into 
	\eqref{eq:tomega1}, yields 
	\begin{equation}\label{eq:prooftred}
	\frac{\mathrm{d}}{\mathrm{d}t}\mathbf{t}_\mathrm{red} = -(\ensuremath{ \mathbf{P} }^{\top} 
	\mathbf{W}\ensuremath{ \mathbf{P} })^{-1}\ensuremath{ \mathbf{P} 
	}_\sigma^{\top}\mathbf{W}\mathbf{Y}_\mathrm{s}\frac{\mathrm{d}}{\mathrm{d}t}\ensuremath{ \mathbf{i} }_{\mathrm{s}}
	-(\ensuremath{ \mathbf{P} }^{\top} \mathbf{W}\ensuremath{ \mathbf{P} })^{-1}\ensuremath{ \mathbf{P} }^{\top}\ensuremath{ \mathbf{K} 
	}_\rho\ensuremath{ \mathbf{P} }\mathbf{t}_\mathrm{red}.
	\end{equation}
	Finally, using equations \eqref{eq:proofpsi}, \eqref{eq:prooftred} and \eqref{eq:as3}, one obtains
	\begin{equation}
	\ensuremath{ \mathbf{v} }_{\mathrm{s}} = \ensuremath{ \mathbf{L} }_\lambda\frac{\mathrm{d}}{\mathrm{d}t}\ensuremath{ \mathbf{i} }_{\mathrm{s}} 
	-\mathbf{Y}_\mathrm{s}^{\top}\mathbf{W}\ensuremath{ \mathbf{P} }(\ensuremath{ \mathbf{P} }^{\top}\mathbf{W}\ensuremath{ \mathbf{P} })^{-1}
	\mathbf{K}_\rho\mathbf{t}_\mathrm{red},
	\end{equation}
	with 
	\begin{equation*}
	\ensuremath{ \mathbf{L} }_\lambda = \mathbf{Y}_\mathrm{s}^{\top}(\mathbf{W} - 
	\mathbf{W}\ensuremath{ \mathbf{P} }(\ensuremath{ \mathbf{P} }^{\top}\mathbf{W}\ensuremath{ \mathbf{P} })^{-1}\ensuremath{ \mathbf{P} 
	}^{\top}\mathbf{W})\mathbf{Y}_\mathrm{s} 
	:=\mathbf{Y}_\mathrm{s}^{\top}\mathbf{W}_\mathrm{P}\mathbf{Y}_\mathrm{s}.
	\end{equation*}
	If $\ensuremath{ \mathbf{L} }_\lambda$ is positive 
	definite, then, 
	applying Proposition \ref{prop:induct-like} concludes the proof. 
	
	\vspace{0.5em}
	\noindent Let us verify that $\ensuremath{ \mathbf{x} }^{\top}\ensuremath{ \mathbf{L} }_{\lambda}\ensuremath{ \mathbf{x} } > 0$, for 
	$\ensuremath{ \mathbf{x} }\neq 
	\mathbf{0}$. Analogously to the case of $\ensuremath{ \mathbf{P} }^{\top}\mathbf{W}\ensuremath{ \mathbf{P} }$, one can show that 
	$\mathbf{W}_\mathrm{P} = 
	\mathbf{W}^{\frac{1}{2}}\ensuremath{ \mathbf{Q} }_{\mathrm{W}}\mathbf{W}^{\frac{1}{2}}$, with $\ensuremath{ \mathbf{Q} }_{\mathrm{W}}$ being a 
	projector and again we just need to 
	show that $(\mathbf{W} - 
	\mathbf{W}\ensuremath{ \mathbf{P} }(\ensuremath{ \mathbf{P} }^{\top}\mathbf{W}\ensuremath{ \mathbf{P} })^{-1}\ensuremath{ \mathbf{P} 
	}^{\top}\mathbf{W})\mathbf{Y}_\mathrm{s}\ensuremath{ \mathbf{x} }\neq \mathbf{0}$. Let us assume 
	$(\mathbf{W} - 
	\mathbf{W}\ensuremath{ \mathbf{P} }(\ensuremath{ \mathbf{P} }^{\top}\mathbf{W}\ensuremath{ \mathbf{P} })^{-1}\ensuremath{ \mathbf{P} 
	}^{\top}\mathbf{W})\mathbf{Y}_\mathrm{s}\ensuremath{ \mathbf{x} } = \mathbf{0}$, then
	\begin{equation*}
	\mathbf{W}\mathbf{Y}_\mathrm{s}\ensuremath{ \mathbf{x} } = \mathbf{W}\ensuremath{ \mathbf{P} }(\ensuremath{ \mathbf{P} 
	}^{\top}\mathbf{W}\ensuremath{ \mathbf{P} })^{-1}\ensuremath{ \mathbf{P} }^{\top}\mathbf{W}\mathbf{Y}_\mathrm{s}\ensuremath{ \mathbf{x} }=:
	\mathbf{W}\ensuremath{ \mathbf{P} }\mathbf{y}.
	\end{equation*}
	Using Proposition \ref{prop:decomp}, we can write
	\begin{align*}
	\mathbf{Y}_\mathrm{s}\ensuremath{ \mathbf{x} } = \ensuremath{\widetilde{\mathbf{S}}}^{\top}\ensuremath{ \mathbf{x} }_1 + 
	\ensuremath{\mathbf{M}_{\mu}}^{-1}\ensuremath{\mathbf{C}}^{\top}\ensuremath{ \mathbf{x} }_2 && \text{and} &&
	\ensuremath{ \mathbf{P} }\mathbf{y} = \ensuremath{\widetilde{\mathbf{S}}}^{\top}\mathbf{y}_1 + 
	\ensuremath{\mathbf{M}_{\mu}}^{-1}\ensuremath{\mathbf{C}}^{\top}\mathbf{y}_2
	\end{align*}
	and
	\begin{align*}
	\mathbf{W}\mathbf{Y}_\mathrm{s}\ensuremath{ \mathbf{x} } = \ensuremath{\mathbf{C}}^{\top}\ensuremath{ \mathbf{x} }_2 = \mathbf{W}\ensuremath{ 
	\mathbf{P} }\mathbf{y} = \ensuremath{\mathbf{C}}^{\top}\mathbf{y}_2.
	\end{align*}
	But, according to Assumption \ref{ass:curlYP},
	\begin{align*}
	\ensuremath{\mathbf{C}}\ensuremath{\mathbf{M}_{\mu}}^{-1}\ensuremath{\mathbf{C}}^{\top}\ensuremath{ \mathbf{x} }_2 = 
	\ensuremath{\mathbf{C}}\mathbf{Y}_\mathrm{s}\ensuremath{ \mathbf{x} } \neq \ensuremath{\mathbf{C}}\ensuremath{ \mathbf{P} }\mathrm{y} 
	= \ensuremath{\mathbf{C}}\ensuremath{\mathbf{M}_{\mu}}^{-1}\ensuremath{\mathbf{C}}^{\top}\mathbf{y}_2,
	\end{align*}
	thus $ \ensuremath{\mathbf{C}}^{\top}\ensuremath{ \mathbf{x} }_2 \neq \ensuremath{\mathbf{C}}^{\top}\mathbf{y}_2$ and 
	$\mathbf{W}\mathbf{Y}_\mathrm{s}\ensuremath{ \mathbf{x} } \neq  \mathbf{W}\ensuremath{ \mathbf{P} }\mathbf{y}$. 
	Therefore, 
	$\ensuremath{ \mathbf{L} }_\lambda$ is positive definite which concludes the proof.
\end{proof}
We have proven that the T-$\Omega$ formulation embedded in a circuit behaves like an inductance  from the index point of view. Exciting the field 
model either with a current source or a voltage source can be seen as a particular circuit coupling and this yields the following Corollary.
\begin{corollary}[Excitation index of the T-$\Omega$ formulation]
	The discrete (gauged) system of equations of the T-$\Omega$ formulation with circuit coupling equation \eqref{eq:tomegauge} has differentiation 
	index
	\begin{itemize}
		\item 1, if the voltage $\ensuremath{ \mathbf{v} }_\mathrm{s} $ is prescribed.
		\item 2, if the current $\ensuremath{ \mathbf{i} }_\mathrm{s}$ is prescribed.
	\end{itemize}
\end{corollary}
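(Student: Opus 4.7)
The plan is to view each prescribed excitation as a trivial one-branch circuit attached to the T-$\Omega$ element and then invoke Theorem~\ref{theo:index}, since Proposition~\ref{prop:tomega} has already established that the discrete T-$\Omega$ system satisfies the hypotheses of Definition~\ref{def:inductlike}. Concretely, prescribing $\ensuremath{ \mathbf{v} }_\mathrm{s}$ amounts to connecting an ideal voltage source $\mathrm{V}$ in parallel with the $\lambda$-element between the same two terminal nodes, whereas prescribing $\ensuremath{ \mathbf{i} }_\mathrm{s}$ amounts to connecting an ideal current source $\mathrm{I}$ in parallel with it. In both cases Assumption~\ref{ass:unique} is satisfied trivially (no CV- or VV-cutsets/loops occur since there is only one source branch and positive definiteness of the lumped matrices is vacuous).

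For the voltage case I would verify the two topological conditions of Theorem~\ref{theo:index}(i). The incidence structure consists of the single voltage-source column and the $\lambda$-columns, sharing the same two nodes, so $\ensuremath{ \mathbf{A} }_\mathrm{V}$ has full column rank, which gives $\ker\ensuremath{ \mathbf{Q} }_\mathrm{C}^{\top}\ensuremath{ \mathbf{A} }_\mathrm{V} = \{0\}$ (no CV-loop). On the other hand $\ker(\ensuremath{ \mathbf{A} }_\mathrm{R}\ \ensuremath{ \mathbf{A} }_\mathrm{C}\ \ensuremath{ \mathbf{A} }_\mathrm{V})^{\top}=\ker\ensuremath{ \mathbf{A} }_\mathrm{V}^{\top}=\{0\}$, since every node is incident to the voltage source, so no $LI\lambda$-cutset exists. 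Hence the differential index equals $1$.

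For the current case I would instead verify that the topology falls into case (ii) of Theorem~\ref{theo:index}. Now $\ensuremath{ \mathbf{A} }_\mathrm{V}$ is empty and $\ensuremath{ \mathbf{A} }_\mathrm{C}=\ensuremath{ \mathbf{A} }_\mathrm{R}=0$, so $\ker(\ensuremath{ \mathbf{A} }_\mathrm{R}\ \ensuremath{ \mathbf{A} }_\mathrm{C}\ \ensuremath{ \mathbf{A} }_\mathrm{V})^{\top}$ is the whole node-potential space, which is nontrivial; equivalently, deleting the $\lambda$-branch together with the current-source branch disconnects the two terminal nodes, so $\{\mathrm{I},\lambda\}$ is a genuine $LI\lambda$-cutset. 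Theorem~\ref{theo:index}(ii) then yields index $2$.

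The potential obstacle is purely bookkeeping rather than mathematical: one has to check that, in the current-excited case, Assumption~\ref{ass:unique} still applies even though the $LI\lambda$-cutset is exactly what produces the index jump, and that the degenerate incidence matrices (with empty $\mathrm{C}$, $\mathrm{R}$, $\mathrm{V}$ blocks) still make the projector $\ensuremath{ \mathbf{Q} }_\mathrm{C}$ well-defined. Once this is handled by treating the empty blocks as trivially full-rank conditions, the corollary follows immediately from Proposition~\ref{prop:tomega} and Theorem~\ref{theo:index} with no further computation.
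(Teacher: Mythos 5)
Your argument is exactly the paper's intended route: the paper derives this corollary by viewing the prescribed voltage or current as a particular (trivial) circuit coupling of the inductance-like element established in Proposition~\ref{prop:tomega} and then reading off the index from Theorem~\ref{theo:index}, with the current-source case producing an $LI\lambda$-cutset and the voltage-source case excluding both $LI\lambda$-cutsets and CV-loops. Your topological verifications (including the check that Assumption~\ref{ass:unique} still holds in the current-excited case because $\ensuremath{ \mathbf{A} }_\lambda$ appears in the no-I-cutset condition) are correct, so the proposal matches the paper's proof in substance and merely spells out details the paper leaves implicit.
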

Therefore, a voltage excitation leads to a system with a lower index and, hence, can be numerically handled in an easier way.
Now, the same analysis will be done for the A* formulation in order to compare both cases.

\subsection{DAE Index of the A* Formulation}

Again a tree-cotree gauge is introduced, albeit this time in the non-conducting domain $\Omega_\mathrm{c}^\mathsf{c}$. Notice that the reduction of 
the 
projection matrix $\ensuremath{ \mathbf{P} }$ that deletes the necessary degrees of freedom is different from the one in the T-$\Omega$ formulation 
\eqref{eq:tomegauge}.
\begin{assumption}[Gauged A* formulation]\label{ass:agauge}
	The discrete system \eqref{eq:astar} is gauged and thus rewritten as 
	\begin{equation}\label{eq:astargauge}
	\begin{aligned}
	\ensuremath{ \mathbf{P} }^{\top}\ensuremath{\mathbf{M}_{\sigma}}\ensuremath{ \mathbf{P} }\frac{\mathrm{d}}{\mathrm{d}t}\ensuremath{ \mathbf{a} 
	}_\mathrm{red} + \ensuremath{ \mathbf{P} }^{\top}\ensuremath{\mathbf{C}}^{\top}\ensuremath{\mathbf{M}_{\nu}}\ensuremath{\mathbf{C}}\ensuremath{ 
	\mathbf{P} } \ensuremath{ \mathbf{a} }_\mathrm{red} - 
	\ensuremath{ \mathbf{P} }^{\top}\ensuremath{ \mathbf{X} }_\mathrm{s}\ensuremath{ \mathbf{i} }_\mathrm{s} &=0\\
	\frac{\mathrm{d}}{\mathrm{d}t}\ensuremath{ \mathbf{X} }_\mathrm{s}^{\top}\ensuremath{ \mathbf{P} }\ensuremath{ \mathbf{a} }_\mathrm{red} - 
	\ensuremath{ \mathbf{v} }_{\mathrm{s}} &= 0,
	\end{aligned}
	\end{equation}
	such that the matrix 
	pencil $\lambda\mathbf{\bar{M}}_\sigma + \ensuremath{ \mathbf{K} }_\nu$ is positive definite for $\lambda>0$ with $\mathbf{\bar{M}}_\sigma = 
	\ensuremath{ \mathbf{P} }^{\top}\ensuremath{\mathbf{M}_{\sigma}}\ensuremath{ \mathbf{P} }$
	and $\ensuremath{ \mathbf{K} }_\nu = 
	\ensuremath{ \mathbf{P} }^{\top}\ensuremath{\mathbf{C}}^{\top}\ensuremath{\mathbf{M}_{\nu}}\ensuremath{\mathbf{C}}\ensuremath{ \mathbf{P} }$.
\end{assumption}
For simplicity of notation, we introduce the matrix $\mathbf{\bar{X}}_\mathrm{s} = \ensuremath{ \mathbf{P} }^{\top}\ensuremath{ \mathbf{X} 
}_\mathrm{s}$.
\begin{assumption}[Discrete winding function]\label{ass:discretedomain}
	The discrete gauged winding function matrix $\mathbf{\bar{X}}_\mathrm{s}$ fulfils
	\begin{itemize}
		\item it has full column rank.
		\item $\mathrm{im} \mathbf{\bar{X}}_\mathrm{s}\perp\mathrm{im}\mathbf{\bar{M}}_\sigma$.
	\end{itemize}
\end{assumption}
This last assumption states properties of the discrete matrices, inspired by the properties of the domains and the continuous functions stated in 
Assumption \ref{ass:domain}. 
The first property is motivated by the fact that each column of matrix $\mathbf{\bar{X}}_\mathrm{s}$ is the discretisation of a different winding 
function $\vec{\chi}_\mathrm{s}^{(j)}$ which all have disjoint supports.  Similarly, the conducting domain and the source domain are disjoint, which 
inspires the second property that could be relaxed but is kept for simplicity.

\begin{prop}[A* inductance-like element]\label{prop:astar}
	The discrete (gauged) system of equations of the A* formulation with coupling equation \eqref{eq:astargauge}  is an inductance-like element. 
\end{prop}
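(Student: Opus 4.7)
I verify the hypotheses of Proposition \ref{prop:induct-like} by identifying $\ensuremath{ \mathbf{x} }_\lambda=\ensuremath{ \mathbf{a} }_\mathrm{red}$, $\ensuremath{ \mathbf{i} }_\lambda=\ensuremath{ \mathbf{i} }_\mathrm{s}$, $\ensuremath{ \mathbf{v} }_\lambda=\ensuremath{ \mathbf{v} }_\mathrm{s}$ and extracting, after at most one additional differentiation of \eqref{eq:astargauge}, the two relations $\dot{\ensuremath{ \mathbf{a} }}_\mathrm{red}=\mathbf{f}_{\ensuremath{ \mathbf{x} }}$ and $\dot{\ensuremath{ \mathbf{i} }}_\mathrm{s}=\ensuremath{ \mathbf{L} }_\lambda^{-1}\ensuremath{ \mathbf{v} }_\mathrm{s}+\mathbf{f}_{\ensuremath{ \mathbf{i} }}$ with $\ensuremath{ \mathbf{L} }_\lambda$ positive definite. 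The main difficulty is that $\mathbf{\bar{M}}_\sigma$ is only positive semidefinite (because $\sigma=0$ outside the conducting region), which prevents a direct solve. This mirrors the role played by $\ensuremath{\widetilde{\mathbf{S}}}\ensuremath{\mathbf{M}_{\mu}}\ensuremath{\widetilde{\mathbf{S}}}^{\top}$ in the T-$\Omega$ proof of Proposition \ref{prop:tomega}.

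\textbf{Orthogonal decomposition along $\ker\mathbf{\bar{M}}_\sigma$.} Let $\ensuremath{ \mathbf{Q} }_\sigma$ be the orthogonal projector onto $\ker\mathbf{\bar{M}}_\sigma$. The orthogonality clause of Assumption \ref{ass:discretedomain} gives $\mathrm{im}\,\mathbf{\bar{X}}_\mathrm{s}\subseteq\ker\mathbf{\bar{M}}_\sigma$, so that $\ensuremath{ \mathbf{Q} }_\sigma\mathbf{\bar{X}}_\mathrm{s}=\mathbf{\bar{X}}_\mathrm{s}$ and $\mathbf{\bar{X}}_\mathrm{s}^{\top}(\ensuremath{ \mathbf{I} }-\ensuremath{ \mathbf{Q} }_\sigma)=\mathbf{0}$. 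Multiplying the first equation of \eqref{eq:astargauge} by $\ensuremath{ \mathbf{Q} }_\sigma$ annihilates the time derivative and produces the algebraic relation
\begin{equation*}
\ensuremath{ \mathbf{Q} }_\sigma\ensuremath{ \mathbf{K} }_\nu\ensuremath{ \mathbf{a} }_\mathrm{red}=\mathbf{\bar{X}}_\mathrm{s}\ensuremath{ \mathbf{i} }_\mathrm{s},
\end{equation*}
while multiplying by $\ensuremath{ \mathbf{I} }-\ensuremath{ \mathbf{Q} }_\sigma$ eliminates the source and gives $\mathbf{\bar{M}}_\sigma\dot{\ensuremath{ \mathbf{a} }}_\mathrm{red}=-(\ensuremath{ \mathbf{I} }-\ensuremath{ \mathbf{Q} }_\sigma)\ensuremath{ \mathbf{K} }_\nu\ensuremath{ \mathbf{a} }_\mathrm{red}$, which uniquely determines $(\ensuremath{ \mathbf{I} }-\ensuremath{ \mathbf{Q} }_\sigma)\dot{\ensuremath{ \mathbf{a} }}_\mathrm{red}$ as a function of $\ensuremath{ \mathbf{a} }_\mathrm{red}$ because $\mathbf{\bar{M}}_\sigma$ is invertible on $\mathrm{im}\,\mathbf{\bar{M}}_\sigma$.

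\textbf{Closing the system with one differentiation.} Differentiating the algebraic relation yields $\ensuremath{ \mathbf{Q} }_\sigma\ensuremath{ \mathbf{K} }_\nu\dot{\ensuremath{ \mathbf{a} }}_\mathrm{red}=\mathbf{\bar{X}}_\mathrm{s}\dot{\ensuremath{ \mathbf{i} }}_\mathrm{s}$, and the coupling equation reduces to $\mathbf{\bar{X}}_\mathrm{s}^{\top}\ensuremath{ \mathbf{Q} }_\sigma\dot{\ensuremath{ \mathbf{a} }}_\mathrm{red}=\ensuremath{ \mathbf{v} }_\mathrm{s}$. I next check that $\ensuremath{ \mathbf{Q} }_\sigma\ensuremath{ \mathbf{K} }_\nu\ensuremath{ \mathbf{Q} }_\sigma$ is positive definite on $\mathrm{im}\,\ensuremath{ \mathbf{Q} }_\sigma$: for $\mathbf{x}\in\ker\mathbf{\bar{M}}_\sigma\setminus\{\mathbf{0}\}$ the pencil condition in Assumption \ref{ass:agauge} (at $\lambda=1$) forces $\mathbf{x}^{\top}\ensuremath{ \mathbf{K} }_\nu\mathbf{x}=\mathbf{x}^{\top}(\mathbf{\bar{M}}_\sigma+\ensuremath{ \mathbf{K} }_\nu)\mathbf{x}>0$. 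Solving the differentiated constraint for $\ensuremath{ \mathbf{Q} }_\sigma\dot{\ensuremath{ \mathbf{a} }}_\mathrm{red}$ in terms of $\dot{\ensuremath{ \mathbf{i} }}_\mathrm{s}$ and the already known $(\ensuremath{ \mathbf{I} }-\ensuremath{ \mathbf{Q} }_\sigma)\dot{\ensuremath{ \mathbf{a} }}_\mathrm{red}$ and inserting into the reduced coupling equation gives
\begin{equation*}
\ensuremath{ \mathbf{L} }_\lambda\dot{\ensuremath{ \mathbf{i} }}_\mathrm{s}=\ensuremath{ \mathbf{v} }_\mathrm{s}+\mathbf{f}_{\ensuremath{ \mathbf{i} }}(\ensuremath{ \mathbf{a} }_\mathrm{red}),
\qquad
\ensuremath{ \mathbf{L} }_\lambda:=\mathbf{\bar{X}}_\mathrm{s}^{\top}\bigl(\ensuremath{ \mathbf{Q} }_\sigma\ensuremath{ \mathbf{K} }_\nu\ensuremath{ \mathbf{Q} }_\sigma\bigr)^{\dagger}\mathbf{\bar{X}}_\mathrm{s},
\end{equation*}
where $(\cdot)^{\dagger}$ denotes the inverse on $\mathrm{im}\,\ensuremath{ \mathbf{Q} }_\sigma$. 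Combined with the previously obtained piece, $\dot{\ensuremath{ \mathbf{a} }}_\mathrm{red}$ becomes a function of $\ensuremath{ \mathbf{a} }_\mathrm{red}$, $\ensuremath{ \mathbf{i} }_\mathrm{s}$ and $\ensuremath{ \mathbf{v} }_\mathrm{s}$, matching the template of Proposition \ref{prop:induct-like}.

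\textbf{Main obstacle.} The decisive step is the positive definiteness of $\ensuremath{ \mathbf{L} }_\lambda$, which requires both parts of Assumption \ref{ass:discretedomain} jointly. For $\mathbf{x}\neq\mathbf{0}$, the full column rank of $\mathbf{\bar{X}}_\mathrm{s}$ gives $\mathbf{y}:=\mathbf{\bar{X}}_\mathrm{s}\mathbf{x}\neq\mathbf{0}$, and the inclusion $\mathrm{im}\,\mathbf{\bar{X}}_\mathrm{s}\subseteq\mathrm{im}\,\ensuremath{ \mathbf{Q} }_\sigma$ puts $\mathbf{y}$ precisely in the subspace on which $\bigl(\ensuremath{ \mathbf{Q} }_\sigma\ensuremath{ \mathbf{K} }_\nu\ensuremath{ \mathbf{Q} }_\sigma\bigr)^{\dagger}$ is positive definite, so $\mathbf{x}^{\top}\ensuremath{ \mathbf{L} }_\lambda\mathbf{x}>0$. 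Once $\ensuremath{ \mathbf{L} }_\lambda$ is established positive definite, Proposition \ref{prop:induct-like} applies and the discrete gauged A* system is an inductance-like element, completing the proof.
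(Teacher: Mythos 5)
Your proof is correct and follows essentially the same route as the paper: both split the system along $\ker\mathbf{\bar{M}}_\sigma$ with a projector $\mathbf{Q}_\sigma$, recover the differential part of $\mathbf{a}_\mathrm{red}$ from the invertibility of $\mathbf{\bar{M}}_\sigma$ on its image, differentiate the remaining algebraic constraint once, and establish positive definiteness of $\mathbf{L}_\lambda$ from the full column rank of $\mathbf{\bar{X}}_\mathrm{s}$ (via the orthogonality clause of Assumption \ref{ass:discretedomain}) together with the pencil condition of Assumption \ref{ass:agauge}, before invoking Proposition \ref{prop:induct-like}. The only cosmetic difference is that you express $\mathbf{L}_\lambda$ through the pseudoinverse of $\mathbf{Q}_\sigma\mathbf{K}_\nu\mathbf{Q}_\sigma$ restricted to $\mathrm{im}\,\mathbf{Q}_\sigma$, whereas the paper uses the equivalent regularised inverse $(\mathbf{Q}_\sigma^{\top}\mathbf{K}_\nu\mathbf{Q}_\sigma+\mathbf{P}_\sigma^{\top}\mathbf{P}_\sigma)^{-1}$.
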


\begin{proof}
	First, a projector $\ensuremath{ \mathbf{Q} }_\sigma$ onto $\ker\mathbf{\bar{M}}_\sigma$ is defined and $\ensuremath{ \mathbf{P} }_\sigma = 
	\ensuremath{ \mathbf{I} }-\ensuremath{ \mathbf{Q} }_\sigma$. System 
	\eqref{eq:astargauge} 
	can 
	now be rewritten as
	\begin{subequations}\label{eq:astargauged}
		\begin{align}
		\bar{\ensuremath{ \mathbf{M} }}_\sigma\frac{\mathrm{d}}{\mathrm{d}t}\ensuremath{ \mathbf{a} }_\mathrm{red} + \ensuremath{ \mathbf{P} 
		}_\sigma^{\top} \ensuremath{ \mathbf{K} }_\nu\ensuremath{ \mathbf{a} }_\mathrm{red} - 
		\ensuremath{ \mathbf{P} }_\sigma^{\top}\mathbf{\bar{X}}_\mathrm{s}\ensuremath{ \mathbf{i} }_{\mathrm{s}} 
		&=0\label{eq:as1}\\
		\ensuremath{ \mathbf{Q} }_\sigma^{\top}\ensuremath{ \mathbf{K} }_\nu\ensuremath{ \mathbf{a} }_\mathrm{red} - \ensuremath{ \mathbf{Q} 
		}_\sigma^{\top}\mathbf{\bar{X}}_\mathrm{s}\ensuremath{ \mathbf{i} }_{\mathrm{s}} &=0\label{eq:as2}\\
		\frac{\mathrm{d}}{\mathrm{d}t}\mathbf{\bar{X}}_\mathrm{s}^{\top}\ensuremath{ \mathbf{a} }_\mathrm{red}  - \ensuremath{ \mathbf{v} 
		}_{\mathrm{s}} &=0.\label{eq:as3}
		\end{align}
	\end{subequations}
	Equation \eqref{eq:as1} allows to extract $\ensuremath{ \mathbf{P} }_\sigma\frac{\mathrm{d}}{\mathrm{d}t}\ensuremath{ \mathbf{a} }_\mathrm{red} = 
	\mathbf{f}_{\sigma}(\ensuremath{ \mathbf{a} }_\mathrm{red}, 
	\ensuremath{ \mathbf{i} }_{\mathrm{s}})$, where $\mathbf{f}_{\sigma}$ can directly be computed from the equation.
	After one time differentiation of \eqref{eq:as2} and inserting into \eqref{eq:as3}
	\begin{align*}
	\frac{\mathrm{d}}{\mathrm{d}t}\ensuremath{ \mathbf{i} }_{\mathrm{s}} = \ensuremath{ \mathbf{L} }_\lambda^{-1}\ensuremath{ \mathbf{v} 
	}_{\mathrm{s}} + \mathbf{f}_{\mathrm{s}}(\ensuremath{ \mathbf{a} }_\mathrm{red}, \ensuremath{ \mathbf{i} }_\mathrm{s}),
	\end{align*}
	is obtained, with $\mathbf{f}_{\mathrm{s}}$ being a result of inserting $\mathbf{f}_{\sigma}$ into \eqref{eq:as3}. Here $\ensuremath{ \mathbf{L} 
	}_\lambda = 
	\mathbf{\bar{X}}_\mathrm{s}^{\top}\ensuremath{ \mathbf{Q} }_\sigma(\ensuremath{ \mathbf{Q} }_\sigma^{\top}\ensuremath{ \mathbf{K} 
	}_\nu\ensuremath{ \mathbf{Q} }_\sigma + 
	\ensuremath{ \mathbf{P} }_\sigma^{\top}\ensuremath{ \mathbf{P} }_\sigma)^{-1}\ensuremath{ \mathbf{Q} }_\sigma^{\top}\mathbf{\bar{X}}_\mathrm{s}$ 
	positive definite, as $\ensuremath{ \mathbf{Q} }_\sigma^{\top}\mathbf{\bar{X}}_\mathrm{s}$ has full column rank due to Assumption 
	\ref{ass:discretedomain} and 
	$\ensuremath{ \mathbf{Q} }_\sigma^{\top}\ensuremath{ \mathbf{K} }_\nu\ensuremath{ \mathbf{Q} }_\sigma + 
	\ensuremath{ \mathbf{P} }_\sigma^{\top}\ensuremath{ \mathbf{P} }_\sigma$ is positive definite due to Assumption \ref{ass:agauge}.
\end{proof}
Again, the particular circuit case of a voltage or current source is considered and the same Corollary as far the T-$\Omega$ coupling follows 
from the previous Proposition.
\begin{corollary}[Excitation index of the A* formulation]
	The discrete (gauged) system of equations of the A* formulation with circuit coupling equation \eqref{eq:astargauged} has differentiation index
	\begin{itemize}
		\item 1, if the voltage $\ensuremath{ \mathbf{v} }_\mathrm{s} $ is prescribed.
		\item 2, if the current $\ensuremath{ \mathbf{i} }_\mathrm{s}$ is prescribed.
	\end{itemize}
\end{corollary}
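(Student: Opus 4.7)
The plan is to obtain the corollary as an immediate consequence of Proposition \ref{prop:astar} combined with the general circuit index Theorem \ref{theo:index}. Since Proposition \ref{prop:astar} already certifies that the gauged A* block \eqref{eq:astargauge} satisfies Definition \ref{def:inductlike}, the excitation ``prescribe $\mathbf{v}_\mathrm{s}$'' respectively ``prescribe $\mathbf{i}_\mathrm{s}$'' can be modelled by connecting the inductance-like element $\lambda$ to a trivial MNA circuit consisting of a single voltage source (respectively a single current source) across the same pair of terminals, with incidence matrix $\ensuremath{\mathbf{A}}_\lambda = -\ensuremath{\mathbf{A}}_\mathrm{V}$ (respectively $\ensuremath{\mathbf{A}}_\lambda = -\ensuremath{\mathbf{A}}_\mathrm{I}$). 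Once the problem is cast in this form, the index is read off directly from the topological conditions (i) and (ii) of Theorem \ref{theo:index}.

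First I would verify that Assumption \ref{ass:unique} is fulfilled for both trivial circuits: there is a single voltage (or current) source, hence no loop of voltage sources and, in the voltage-driven case, no cutset containing only current sources. Then I would analyse the two sub-cases separately. In the voltage-driven circuit, $\ensuremath{\mathbf{A}}_\mathrm{R}, \ensuremath{\mathbf{A}}_\mathrm{C}, \ensuremath{\mathbf{A}}_\mathrm{L}$ are empty, while $\ensuremath{\mathbf{A}}_\mathrm{V}$ together with $\ensuremath{\mathbf{A}}_\lambda$ spans all node directions, so $\ker(\ensuremath{\mathbf{A}}_\mathrm{R}\ \ensuremath{\mathbf{A}}_\mathrm{C}\ \ensuremath{\mathbf{A}}_\mathrm{V})^{\top}=\{0\}$ and, since there are no capacitors, $\ker\ensuremath{\mathbf{Q}}_\mathrm{C}^{\top}\ensuremath{\mathbf{A}}_\mathrm{V}=\{0\}$ as well; Theorem \ref{theo:index}(i) yields index $1$. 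In the current-driven circuit, the only non-source element is $\lambda$ itself, so $\{\lambda,\text{I}\}$ is an LI$\lambda$-cutset, i.e.\ $\ker(\ensuremath{\mathbf{A}}_\mathrm{R}\ \ensuremath{\mathbf{A}}_\mathrm{C}\ \ensuremath{\mathbf{A}}_\mathrm{V})^{\top}\neq\{0\}$, and Theorem \ref{theo:index}(ii) gives index $2$.

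The main subtlety, which is the only real work beyond invoking the two earlier results, is to make sure that the excitation is truly represented by a circuit satisfying Assumption \ref{ass:unique}; in particular, in the current-driven case one must allow for a ground reference node so that $\ker\ensuremath{\mathbf{A}}_\mathrm{V}=\{0\}$ is vacuously satisfied and the coupling $\mathbf{v}_\lambda=\ensuremath{\mathbf{A}}_\lambda^{\top}\mathbf{e}$ is well defined. Once this bookkeeping is done, no new analytical estimate is required: the whole statement reduces to checking the two topological criteria of Theorem \ref{theo:index} for the two one-element ``circuits'' that model the two excitations. Thus the corollary follows verbatim from Proposition \ref{prop:astar} and Theorem \ref{theo:index}, in perfect analogy with the T-$\Omega$ corollary stated above.
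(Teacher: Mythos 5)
Your proposal is correct and follows exactly the route the paper takes: it states that the corollary ``follows now as a Corollary from Proposition~\ref{prop:astar} and Theorem~\ref{theo:index}'' by viewing the voltage or current excitation as a particular (trivial) circuit coupling. Your explicit verification of Assumption~\ref{ass:unique} and of the LI$\lambda$-cutset/CV-loop conditions for the two one-source circuits simply fills in the details the paper leaves implicit.
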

This result was already proven in \cite{Bartel_2011aa} for a different gauge and follows now as a Corollary from Proposition \ref{prop:astar} and 
Theorem \ref{theo:index}.

\subsection{Field-Dependent Materials}
In case of having field-dependent materials, the material matrices $\ensuremath{\mathbf{M}_{\mu}}(\mathbf{h})$ for the 
T-$\Omega$ formulation and $\ensuremath{\mathbf{M}_{\nu}}(\mathbf{b})$ for the A* formulation, 
depend on the discretised field quantities $\mathbf{h} = \ensuremath{\mathbf{C}}\mathbf{t}+\ensuremath{\widetilde{\mathbf{S}}}^{\top}\Psi + 
\mathbf{Y}_\mathrm{s}\ensuremath{ \mathbf{i} }_{\mathrm{s}}$ and $\mathbf{b} = \ensuremath{\mathbf{C}}\ensuremath{ \mathbf{a} }$, respectively. 

Taking the time derivative of an equation leads to systems like \eqref{eq:astargauge} and \eqref{eq:tomegauge} with 
differential material matrices $\ensuremath{ \mathbf{M} }_{\mu,\mathrm{d}}$ and $\ensuremath{ \mathbf{M} }_{\nu,\mathrm{d}}$ (see \cite[Chapter 
3]{Romer_2015ab}, 
\cite[Appendix A.3]{Schops_2011ac}) instead of the regular material matrices $\ensuremath{\mathbf{M}_{\mu}}$ and $\ensuremath{\mathbf{M}_{\nu}}$ 
whenever there is a time derivative. For example in the first equation of the T-$\Omega$ formulation
\begin{align*}
\ensuremath{\mathbf{C}}^{\top}\ensuremath{ \mathbf{M} }_\rho\ensuremath{\mathbf{C}}\mathbf{t} + 
\frac{\mathrm{d}}{\mathrm{d}t}(\ensuremath{\mathbf{M}_{\mu}}(\mathbf{h})\mathbf{h})
\end{align*}
applying the chain rule we have
\begin{align*}
\ensuremath{\mathbf{C}}^{\top}\ensuremath{ \mathbf{M} }_\rho\ensuremath{\mathbf{C}}\mathbf{t} + \ensuremath{ \mathbf{M} 
}_{\mu,\mathrm{d}}(\mathbf{h})\frac{\mathrm{d}}{\mathrm{d}t}\mathbf{h}.
\end{align*}
The differential material matrices are built with the differential permeability 
$\mathbf{\mu}_\mathrm{d}(H)$
(respectively the differential reluctivity $\mathbf{\nu}_\mathrm{d}(B)$), where $\lVert \cdot \rVert$ is the Euclidean norm, $H = 
\lVert\ensuremath{\vec{H}}\rVert$ and $B = \lVert\ensuremath{\vec{B}}\rVert$,
\begin{align*}
\mathbf{\mu}_\mathrm{d}(s) &= \mu(\lVert\mathbf{s}\rVert)\ensuremath{ \mathbf{I} } + \frac{1}{s}\frac{\partial 
	\mu(s)}{\partial 
	s}\mathbf{s}\mathbf{s}^{\top} \text{ and }\\
\mathbf{\nu}_\mathrm{d}(s) &= \nu(s)\ensuremath{ \mathbf{I} } + \frac{1}{s}\frac{\partial 
	\nu(s)}{\partial 
	s}\mathbf{s}\mathbf{s}^{\top}, 
\end{align*}
with $\mathbf{s}\in \mathbbm{R}^3$, $s = \lVert\mathbf{s}\rVert$ and $\ensuremath{ \mathbf{I} }\in\mathbbm{R}_{3\times 3}$ the identity tensor of 
second rang. Those tensors are 
positive definite (see 
\cite[Chapter 2]{Pechstein_2004aa}, \cite[Chapter 3]{Romer_2015ab}) under the following natural physical assumptions for the B-H curve $B = 
f_\mathrm{BH}(H)$.
\begin{assumption}[B-H curve \cite{Pechstein_2006aa}]\label{ass:material}
	The B-H curve $$f_\mathrm{BH}(H) = \mu(H)H:\mathbbm{R}_0^+\rightarrow\mathbbm{R}_0^+$$ fulfils
	\begin{itemize}
		\item $f_\mathrm{BH}(s)$ is continuously differentiable.
		\item $f_\mathrm{BH}(0) = 0$.
		\item $f'_\mathrm{BH}(s) \geq \mu_0$, $\forall s>0$.
		\item $\lim_{s\rightarrow\infty}f'(s) = \mu_0$.
	\end{itemize}
	with $\mu_0>0$ being the vacuum permeability.
\end{assumption} 
Analogously to the regular material matrices, we have that the differential material matrices are positive definite provided that the differential 
material tensor is positive definite, which is the case. 
Therefore, the index analysis for the A* and T-$\Omega$ formulations can be analogously transferred to field-dependent materials under 
Assumption \ref{ass:material}.

\begin{prop}[Linear index-2 components]\label{prop:ATgiwli2}
	Both the gauged T-$\Omega$ system \eqref{eq:tomegauge} as well as the A* system \eqref{eq:astargauge} with field-dependent materials have the 
	structure described in Proposition \ref{prop:giwli2} and thus lead to a DAE with linear index-2 components when coupled to a circuit.
\end{prop}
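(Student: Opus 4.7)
The plan is to verify, for each of the two gauged systems, that the discrete voltage $\ensuremath{\mathbf{v}}_\mathrm{s}$ enters the DAE only through a constant matrix $\mathbf{B}$, exactly as required by Proposition~\ref{prop:giwli2}, and then to appeal to Propositions~\ref{prop:tomega} and~\ref{prop:astar} (which already establish the inductance-like character, now with the differential material matrices replacing the regular ones) together with Theorem~\ref{theo:index} to conclude.

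First I would inspect the A* system \eqref{eq:astargauge}: the coupling voltage $\ensuremath{\mathbf{v}}_\mathrm{s}$ appears exclusively in the coupling equation as the additive term $-\ensuremath{\mathbf{v}}_\mathrm{s}$, independently of the magnetic state $\ensuremath{\mathbf{a}}_\mathrm{red}$ and of any material law. Replacing $\ensuremath{\mathbf{M}_{\nu}}$ by $\ensuremath{\mathbf{M}}_{\nu,\mathrm{d}}(\mathbf{b})$ when a time derivative is taken (via the chain rule, as spelled out before the statement) affects only the magnetic subsystem; it introduces no $\ensuremath{\mathbf{v}}_\mathrm{s}$-dependence. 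Hence the system is of the form $\tilde{\ensuremath{\mathbf{F}}}(\tfrac{\mathrm{d}}{\mathrm{d}t}\ensuremath{\mathbf{x}}_\lambda,\tfrac{\mathrm{d}}{\mathrm{d}t}\ensuremath{\mathbf{i}}_\lambda,\ensuremath{\mathbf{x}}_\lambda,\ensuremath{\mathbf{i}}_\lambda,t)+\mathbf{B}\ensuremath{\mathbf{v}}_\mathrm{s}$ with $\mathbf{B}$ a block column consisting of zeros and $-\ensuremath{\mathbf{I}}$.

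Next I would repeat the same inspection for the gauged T-$\Omega$ system \eqref{eq:tomegauge}: in \eqref{eq:tomega1} and \eqref{eq:tomega2} no voltage appears, while in \eqref{eq:tomega3} the voltage enters only as the additive term $-\ensuremath{\mathbf{v}}_\mathrm{s}$. With field-dependent $\mu$ the chain rule replaces $\ensuremath{\mathbf{M}_{\mu}}$ by $\ensuremath{\mathbf{M}}_{\mu,\mathrm{d}}(\mathbf{h})$ wherever a time derivative acts on a $\mu$-weighted quantity, and although $\mathbf{h}=\ensuremath{\mathbf{C}}\mathbf{t}+\ensuremath{\widetilde{\mathbf{S}}}^{\top}\Psi+\mathbf{Y}_\mathrm{s}\ensuremath{\mathbf{i}}_\mathrm{s}$ depends on $\ensuremath{\mathbf{i}}_\mathrm{s}$ it does not depend on $\ensuremath{\mathbf{v}}_\mathrm{s}$. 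Therefore again the system splits as $\tilde{\ensuremath{\mathbf{F}}}+\mathbf{B}\ensuremath{\mathbf{v}}_\mathrm{s}$ with a constant $\mathbf{B}$.

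Having confirmed the linearity-in-$\ensuremath{\mathbf{v}}_\mathrm{s}$ structure, I would then invoke Propositions~\ref{prop:tomega} and~\ref{prop:astar}, noting that their proofs go through verbatim after substituting the differential material matrices, since by Assumption~\ref{ass:material} the tensors $\mu_\mathrm{d}$ and $\nu_\mathrm{d}$ (and hence $\ensuremath{\mathbf{M}}_{\mu,\mathrm{d}}$, $\ensuremath{\mathbf{M}}_{\nu,\mathrm{d}}$) remain symmetric positive definite; all the arguments based on positive definiteness, the Helmholtz split of Proposition~\ref{prop:decomp} and Assumption~\ref{ass:curlYP} continue to apply. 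Consequently both coupled systems are inductance-like elements of exactly the structure in Proposition~\ref{prop:giwli2}, and the conclusion on linear index-2 components follows directly.

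The main obstacle I expect is verifying the clean linear-in-$\ensuremath{\mathbf{v}}_\mathrm{s}$ splitting once one moves to the nonlinear setting: one must be careful that neither the state argument of $\ensuremath{\mathbf{M}}_{\mu,\mathrm{d}}$, $\ensuremath{\mathbf{M}}_{\nu,\mathrm{d}}$ nor any of the terms arising from the chain rule secretly pick up a dependence on $\ensuremath{\mathbf{v}}_\mathrm{s}$. The point is, however, that $\ensuremath{\mathbf{v}}_\mathrm{s}$ is purely a circuit-side Lagrange-type quantity that enters only the coupling equation and only additively, so the required structure is preserved.
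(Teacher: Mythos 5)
Your proposal is correct and follows exactly the argument the paper intends (the paper in fact states this proposition without an explicit proof): you verify that $\mathbf{v}_\mathrm{s}$ enters both gauged systems only additively through a constant block matrix $\mathbf{B}$, that the field-dependent material matrices introduce no hidden $\mathbf{v}_\mathrm{s}$-dependence, and that the inductance-like property persists because the differential material matrices remain positive definite under Assumption~\ref{ass:material}. Nothing further is needed.
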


The A* and T-$\Omega$ formulations are complementary, that is, the potentials defined on them live on spaces dual to each other. Also the 
excitations 
imposed on 
the formulations (either current or voltage excited) live on dual spaces (see Figure~\ref{fig:maxhouse}). Therefore, it could be thought that, 
whereas for one formulation it is 
more 
convenient to impose a current excitation, for the other a voltage excitation is better. However, the analysis shows that both behave equally from 
the 
index point of view, as they are inductance-like elements. In both cases a voltage excitation leads to an index-1 system as for the 
case of an inductor and the index results are more connected to the physics described by the DAE rather than to the formulation.
\section{Numerical Results}\label{sec:numeric}

Numerical examples for a field/circuit coupled system using the A* formulation and comparing an index-1 coupled case with an index-2 one have already 
been demonstrated previously (see e.g. \cite{Bartel_2011aa}). 

For the numerical simulations in this paper, 
the T-$\Omega$ system of equations is solved for a model consisting of a square coil and an aluminium core (see Figure~\ref{fig:coil}) and coupled to 
a 
circuit.
\begin{figure}[h]
	\subfigure[Square coil (transparent grey) with iron core (blue).]{	\includegraphics[width=0.45\textwidth]{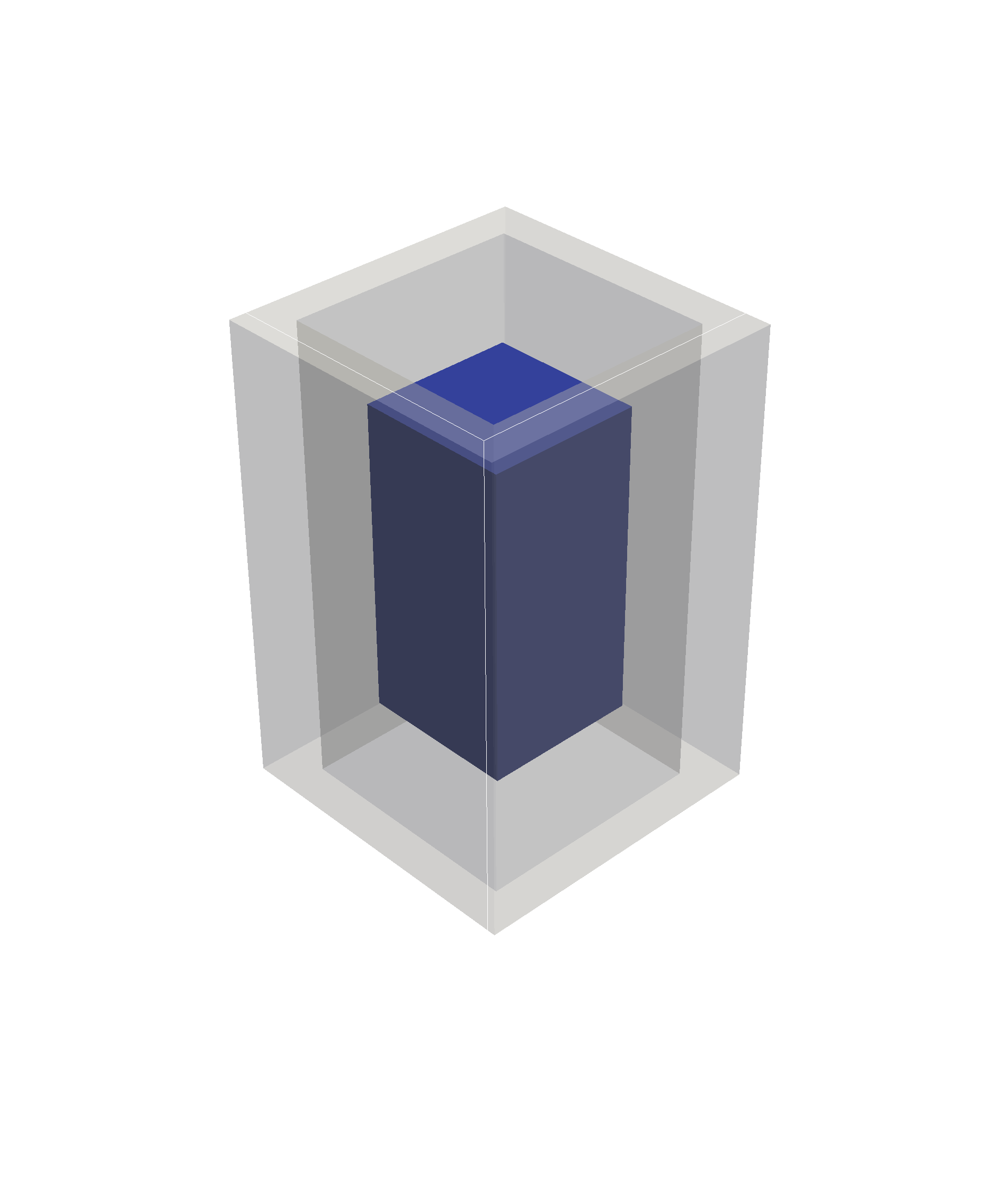}
	}	\hspace{0.06\textwidth}
	\subfigure[Discretised current source $\mathbf{j}_\mathrm{s} = \mathbf{C}\mathbf{Y}_\mathrm{s}$.]{		\includegraphics[width=0.45\textwidth]{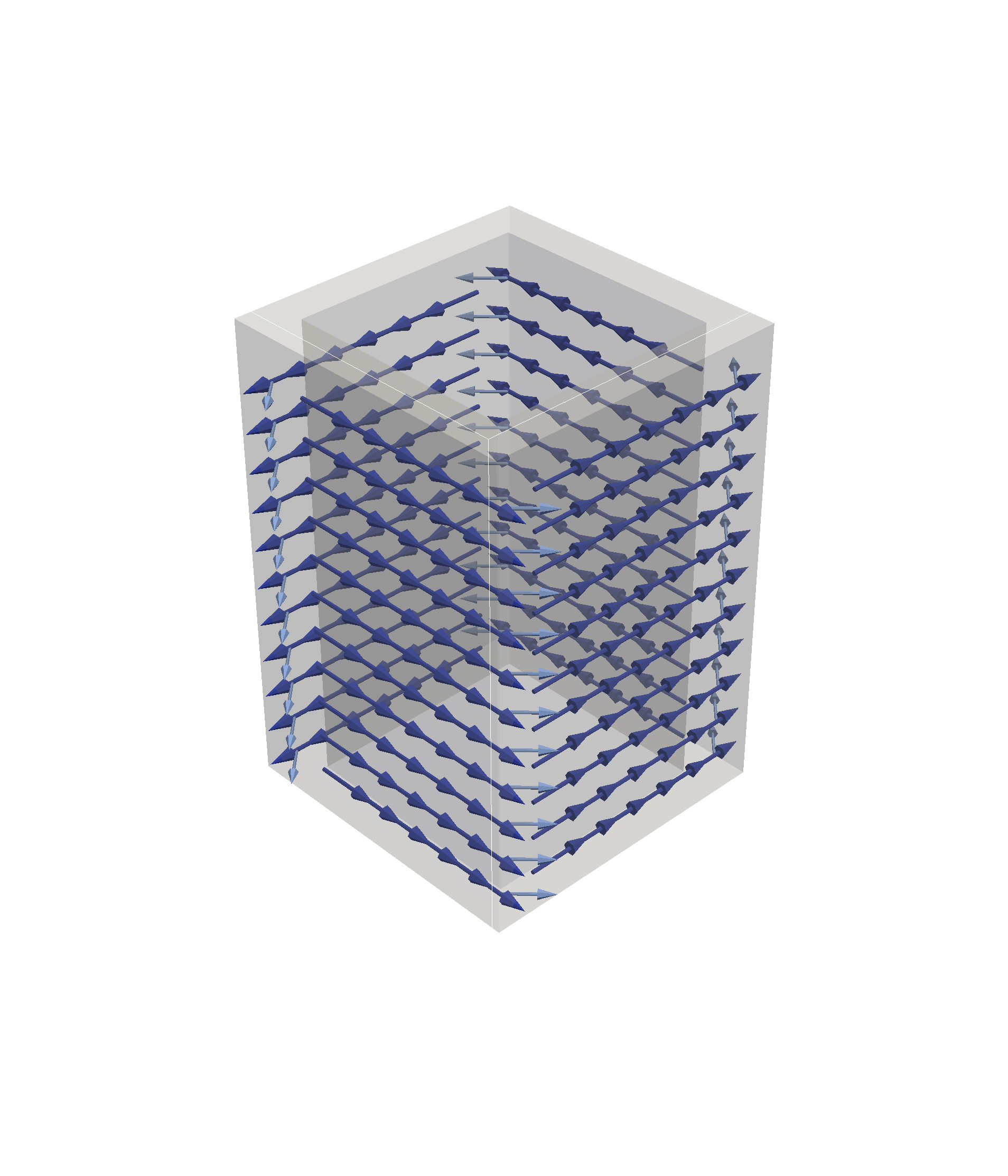}
		}
	\caption{T-$\Omega$ inductance-like element.}\label{fig:coil}
\end{figure}
The discretisation is carried out with the Finite Integration Technique and a tree-cotree gauge is applied to the discretised electric vector 
potential 
in the conducting region (see Section~\ref{sec:T-Omega}).

Two different coupling scenarios are considered. An index-1 case with the magnetoquasistatic device coupled to a voltage source $v_\mathrm{s}(t) = 
\sin(2\pi f_\mathrm{s} t)$ (see Figure~\ref{fig:index1c}) is compared to an index-2 setting (Figure~\ref{fig:index2c}) where the device is coupled to 
a current source
$i_\mathrm{s}(t) = 
\sin(2\pi f_\mathrm{s} t)$, with $f_\mathrm{s} = 2\pi$. In both cases, the simulation is performed first 
with the given excitation $v_\mathrm{s}(t)$ (respectively $i_\mathrm{s}(t)$) and afterwards with a slightly perturbed excitation 
$\tilde{v}_\mathrm{s}(t)$ (respectively $\tilde{i}_\mathrm{s}(t)$), i.e.  
\begin{align*}
\tilde{v}_\mathrm{s}(t) = v_\mathrm{s}(t) + p(t), && \tilde{i}_\mathrm{s}(t) = i_\mathrm{s}(t) + p(t),
\end{align*}
with perturbation
\begin{equation*}
p(t) = \varepsilon_\mathrm{p}\sin(2\pi f_\mathrm{s} \ 10^9t)
\end{equation*}
and $\varepsilon_\mathrm{p} = 10^{-4}$.
\begin{figure}[h]
	\centering
	\subfigure[Index 1: Inductance-like element coupled to voltage source $v_\mathrm{s} = \sin(2\pi f_\mathrm{s} t)$.]{		\includegraphics[width = 0.35\textwidth]{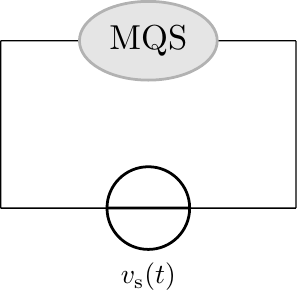}\label{fig:index1c}
	}%
	\hspace{0.1\textwidth}
		\subfigure[Index 2: Inductance-like element coupled to current source $i_\mathrm{s} = \sin(2\pi f_\mathrm{s} t)$.]{		\includegraphics[width = 0.35\textwidth]{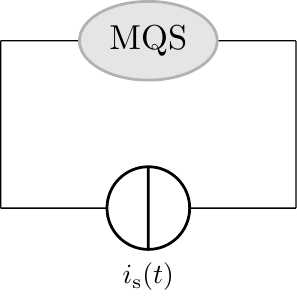}\label{fig:index2c}
	}

	\subfigure[Index 1: Inductance-like element coupled to voltage source $v_\mathrm{s} = \sin(2\pi f_\mathrm{s} t)$ with perturbation 
	$v_\mathrm{p} = 
	\varepsilon_\mathrm{p}\sin(2\pi f_\mathrm{p} t)$.]{		\includegraphics[width = 0.35\textwidth]{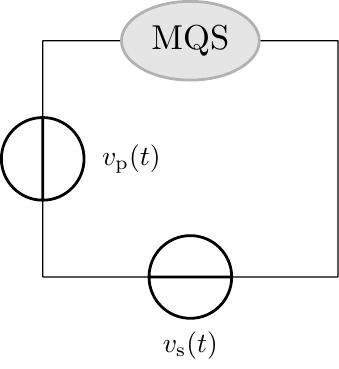}\label{fig:index1p}
	}%
\hspace{0.1\textwidth}
		\subfigure[Index 2: Inductance-like element coupled to current source $i_\mathrm{s} = \sin(2\pi f_\mathrm{s} t)$ with perturbation 
		$i_\mathrm{p} = 
		\varepsilon_\mathrm{p}\sin(2\pi f_\mathrm{p} t)$.]{		\includegraphics[width = 0.35\textwidth]{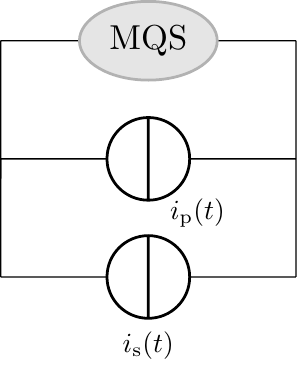}\label{fig:index2p}
	}
	\caption{Different field/circuit coupling schemes.}
\end{figure}
For both cases, an implicit Euler scheme is performed at time interval $\mathcal{I} = [0 \ \  0.5]$ with varying step sizes 
$\delta t = \{8 \cdot 10^{-5},\ 4 \cdot 10^{-5},\, 2 \cdot 10^{-5},\, 10^{-5}\}$. 

Consistent initial conditions on the degrees of freedom 
$\mathbf{x}(t_0) = \mathbf{0}$ are set for the index-1 simulation. In the index-2 case, it has been shown that for DAEs with linear index-2 
components, which is the case for our system (see Proposition \ref{prop:ATgiwli2}),
a consistent initial value is obtained after two implicit Euler iterations \cite{Baumanns_2010aa,Estevez-Schwarz_2000ab}. Here, the starting 
point 
$\mathbf{x}(t_{0}-8\cdot 10^{-5}) = \mathbf{0}$ is selected.

\begin{figure}
	\centering
	\subfigure[Index-1 simulation.]{		\includegraphics[width = 0.49\textwidth]{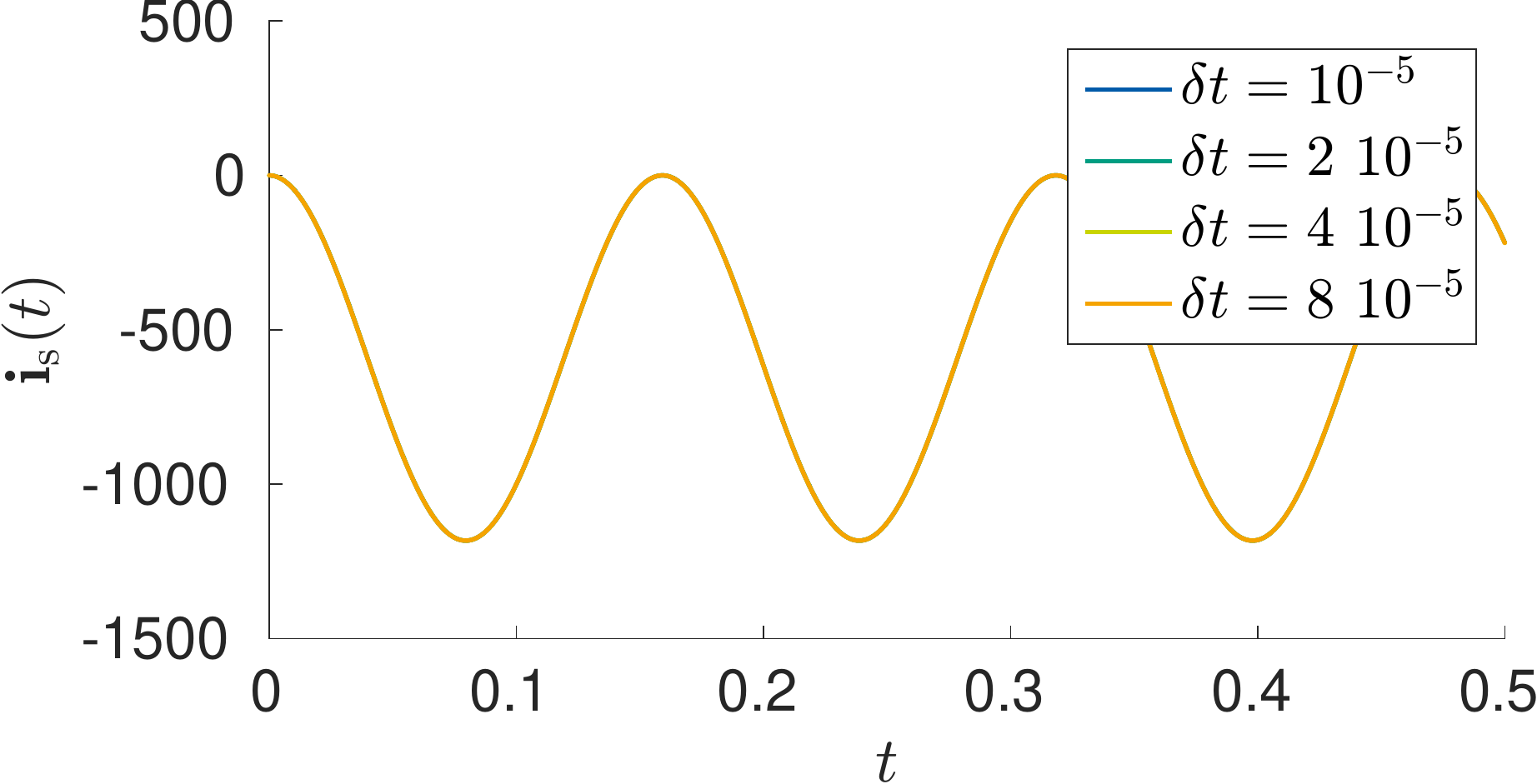}\label{graph:index1c}
	}%
	\subfigure[Index-1 perturbed simulation.]{	\includegraphics[width = 0.49\textwidth]{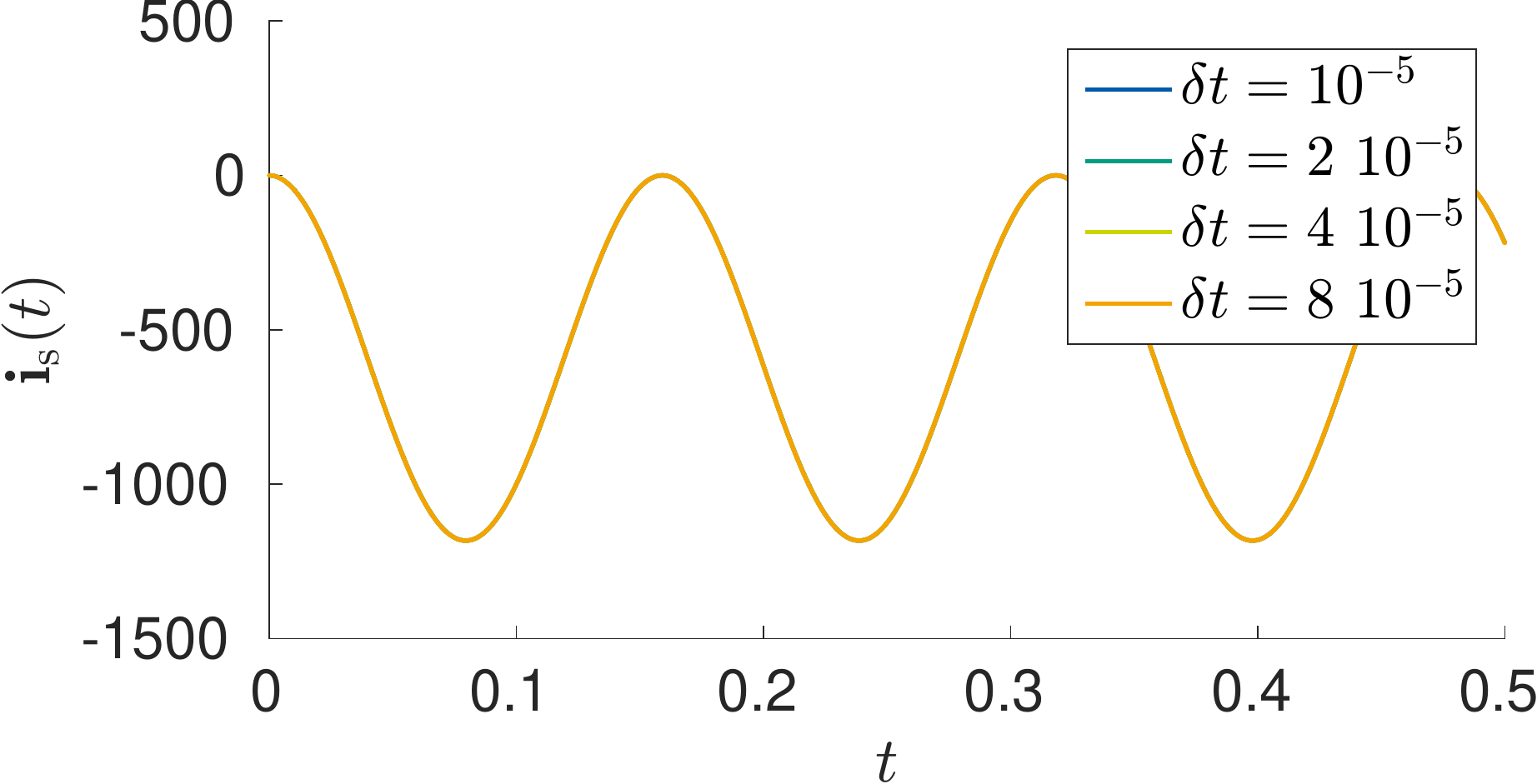}\label{graph:index1p}
}
	\subfigure[Index-2 simulation.]{	\includegraphics[width = 0.49\textwidth]{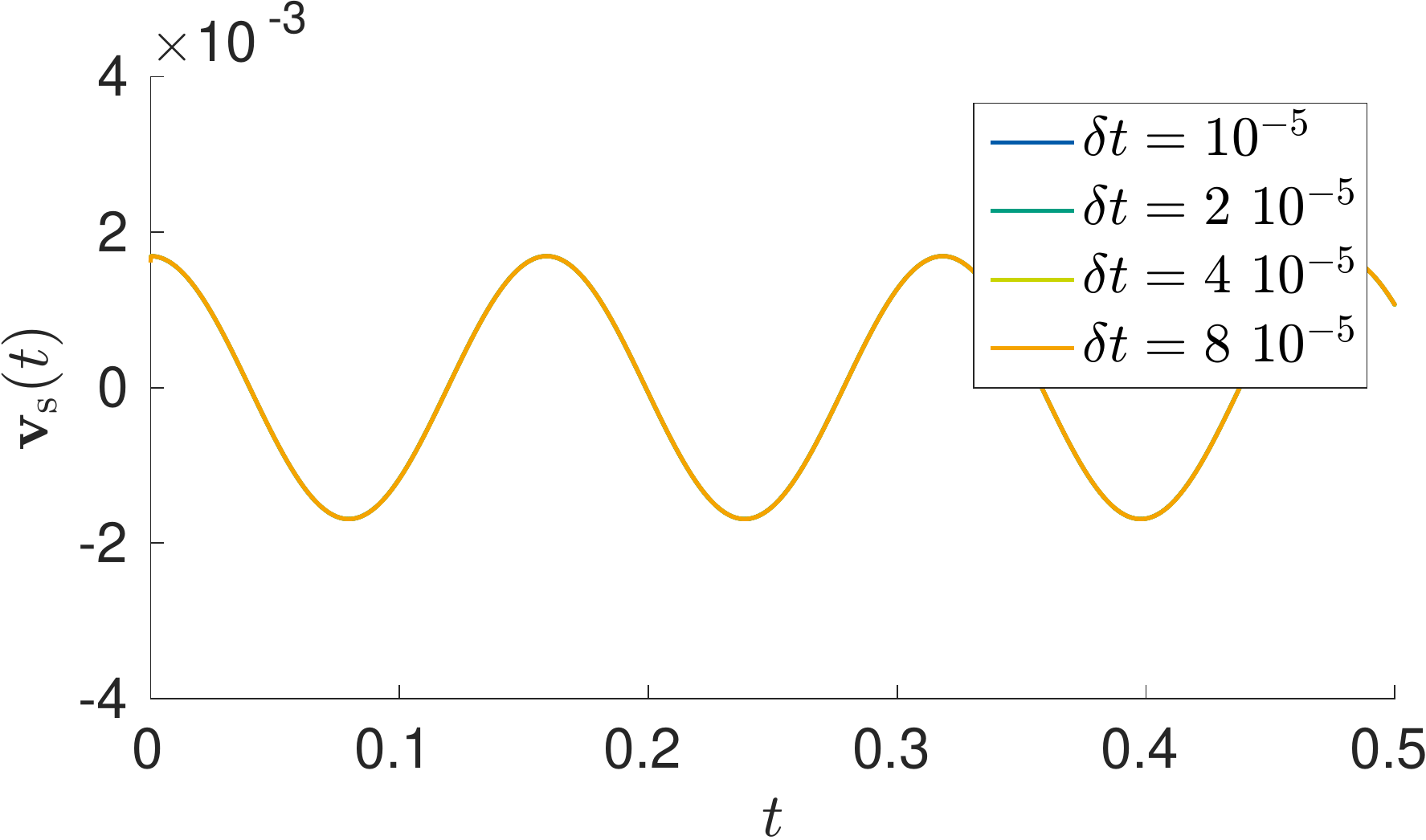}\label{graph:index2c}
}%
\subfigure[Index-2 perturbed simulation.]{	\includegraphics[width = 0.49\textwidth]{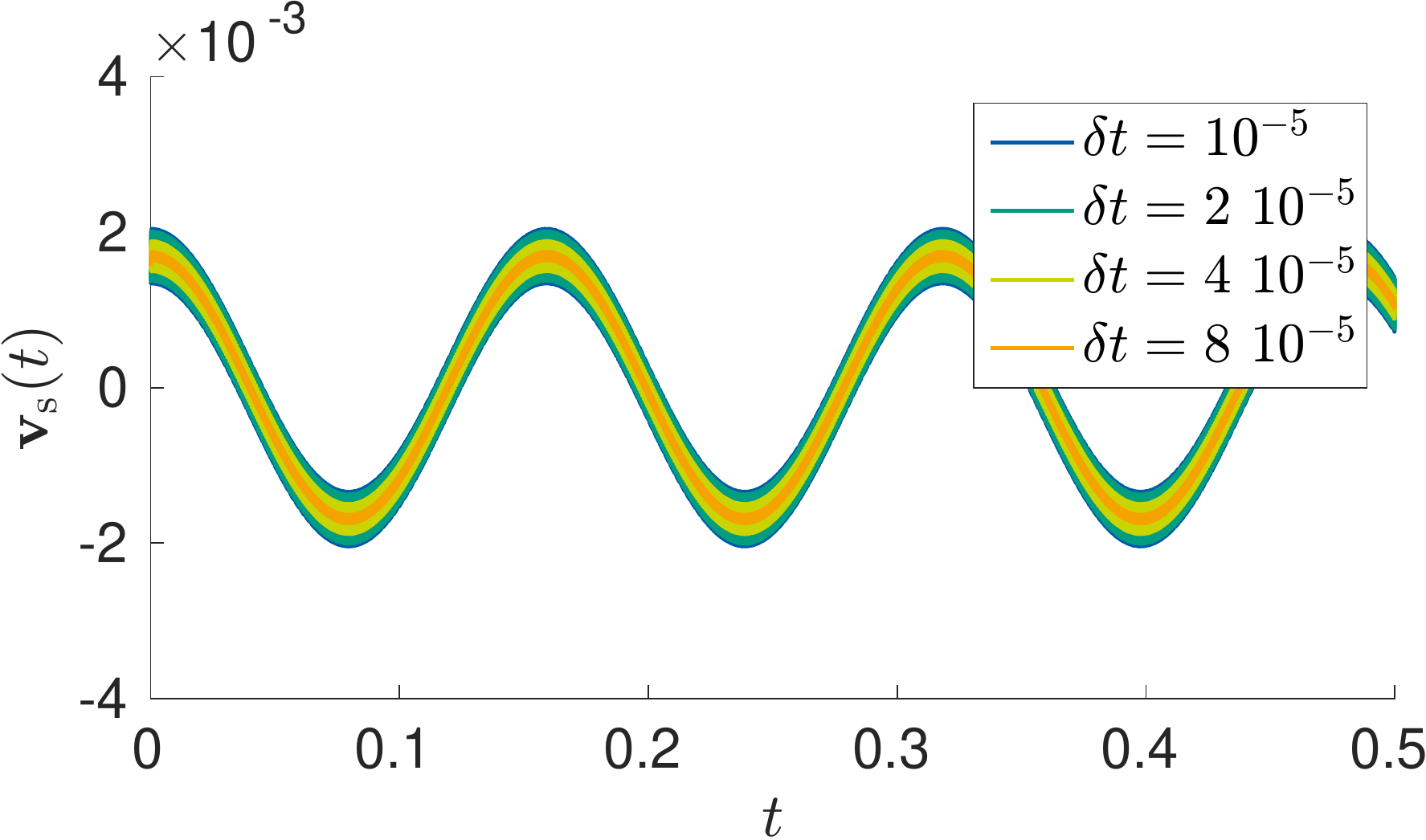}\label{graph:index2p}
}
	\caption{Field/circuit coupled simulation.}\label{fig:graph}
\end{figure}
Figure~\ref{fig:graph} shows the simulation results for both index-1 and index-2  with non-perturbed and perturbed excitations. It can be seen that 
the perturbed index-2 case (Figure~\ref{graph:index2p}) oscillates due to the higher sensitivity of index-2 DAE systems to small 
perturbations, whereas the 
index-1 simulation (Figure~\ref{graph:index1p}) is not significantly affected by the small perturbation on the excitation. 
\section{Conclusions}\label{sec:conclusions}
The paper discusses index results for field/circuit coupled systems for different formulations. The case of the A* formulation was already studied 
previously (see 
\cite{Bartel_2011aa}). However, the index for the field/circuit coupled system of DAEs obtained with a 
T-$\Omega$ formulation had not been analysed before. In order to study the index of the coupled systems, a new generalised element type is 
introduced that from the index point 
of view behaves like an inductor in the MNA formulation. This eases the later index analysis, as now local properties of the DAE system 
describing only the element have to be verified together with topological characteristics of the circuit in order to obtain the index of the entire 
coupled 
system.

Both the A* as well as the T-$\Omega$ formulation field/circuit coupling indexes have been shown to behave like inductances. This yields to the 
conclusion that even though the degrees of freedom of the two formulations and the voltage (respectively the current) excitations live on dual 
spaces, 
prescribing the voltage results in a lower index system in both formulations.

A further study could define a generalised capacitance-like element and derive under which approximations Maxwell's equations embedded as a 
generalised element to a circuit correspond 
to a capacitance-like element.

\begin{acknowledgements}
This work has been supported by the Excellence Initiative of the German Federal and State Governments and the Graduate School of CE at TU Darmstadt.\\
We thank Prof. Caren Tischendorf for the fruitful discussions.
\end{acknowledgements}

\bibliographystyle{spmpsci}      
\bibliography{abbrv,english,bibliography}   

\end{document}